\numberwithin{equation}{section}         
\theoremstyle{plain} \newtheorem{defi}{Definition}[section] 
\theoremstyle{plain} \newtheorem{Lem}{Lemma}[section]       
\theoremstyle{plain}\newtheorem{theorem}{Theorem}[section]  
\theoremstyle{plain}    
\theoremstyle{remark} \newtheorem{Rem}{Remarks}[section]    
\theoremstyle{plain}  
\theoremstyle{plain}\newtheorem{example}{Example}[section]  
\newcommand{\be}{\begin{equation}}
\newcommand{\ee}{\end{equation}}
\newcommand{\bpm}{\begin{pmatrix}}
\newcommand{\epm}{\end{pmatrix}}
\begin{document}


\begin{center}
\LARGE{Strong Predictor-Corrector Euler-Maruyama Methods for Stochastic Differential Equations with Markovian Switching}\\
\end{center}
\begin{center}
Jun Ye, Haibo Li, Lili Xiao\\
\end{center}
\begin{center}
\emph{Department of Mathematical Sciences, Tsinghua University,
Beijing,
P.R.China}\\[.5cm]
\end{center}

\def\abstractname{Abstract}
\begin{abstract}
In this paper numerical methods for solving stochastic differential
equations with Markovian switching (SDEwMSs) are developed by
pathwise approximation. The proposed family of strong
predictor-corrector Euler-Maruyama methods is designed to overcome
the propagation of errors during the simulation of an approximate
path. This paper not only shows the strong convergence of the
numerical solution to the exact solution but also reveals the order
of the error under some conditions on the coefficient functions. A
natural analogue of $p$-stability criterion is studied. Numerical
examples are given to illustrate the computational
efficiency of the new predictor-corrector Euler-Maruyama approximation.\\

\textbf{Keywords:} Strong Predictor-Corrector Euler-Maruyama
methods, Markovian switching , Numerical solutions
\end{abstract}

\section{Introduction}
Stochastic differential equations with Markovian switching (SDEwMSs)
arise in mathematics models of hybrid systems that possess frequent
unpredictable structural changes. One of the distinct features of
such systems is that the underlying dynamics are subject to changes
with respect to certain configurations. Such models have been used
with great success in a variety of application areas, including
flexible manufacturing systems, electric power networks, risk
theory, financial engineering and insurance modeling, we refer the
readers to Arapostathis, Ghosh and Marcus $\cite{AGM93}$, Jobert and
Rogers $\cite{JR6}$, Mao and Yuan $\cite{MY6}$, Rolski, Schmidli,
Schmidt and Teugels $\cite{RSST9}$, Smith $\cite{S2}$, Yang and Yin
$\cite{YY4}$ and references therein.

Generally, although the fundamental theories such as existence and
uniqueness of the solution as well as stability of SDEwMSs have been
well studied, most of SDEwMSs cannot be solved analytically. Thus,
appropriate numerical approximation methods such as the Euler (or
Euler-Maruyama) method are needed to apply SDEwMSs in practice or to
study their properties.

 Yuan and Mao $\cite{YM4}$ firstly considered
the numerical solutions of the following stochastic differential
equations with Markovian switching
\begin{equation}\label{eq:sdewms}
\begin{split}
&dy(t)=f(y(t),r(t))dt+g(y(t),r(t))dW(t),\\
\end{split}
\end{equation}
here $y(t)$ is referred to the state while $r(t)$ is regarded as the
mode. The system will switch from one mode to another in a random
way, and the switching between the modes is governed by a Markov
chain. They proved the mean-square convergence of the
Euler-Maruyama(EM) approximation for this hybrid stochastic systems,
and the order of error was also estimated. Yin, Song and Zhang
$\cite{YSZ5}$ extended (\ref{eq:sdewms}) to a family of more general
jump-diffusions with Markovian switching, and proved the numerical
solutions based on finite-difference procedure weak converge to the
desired limit by means of a martingale problem formulation.

During recent years, there also exist extensive literatures which
prove the convergence of the Euler-Maruyama method applied to some
stochastic differential equation with some additional feature, like
including some sort of delay, jumps, Markovian switching or
combinations thereof, see for example, Bruti-Liberati and Platen
$\cite{BP7}$, Hou, Tong and Zhang $\cite{HTZ9}$, Li and Hou
$\cite{LH6}$, Mao and Yuan $\cite{MY6}$, Rathinasamy and
Balachandran $\cite{RB8}$, among others. The corresponding proof is
basically the same each time, the only novelty coming from changing
it a bit to deal with the additional feature.

It is well known that Euler-Maruyama method and most other explicit
schemes for solving stochastic differential equations (SDEs) work
unreliably and sometimes generate large errors, see for instance
Milstein, Platen and Schurz $\cite{MPS8}$, implicit and
predictor-corrector schemes are designed to achieve improved
numerical stability and turn out to be better suited to simulation
task. Generally, implicit schemes usually cost significant
computational time and are sometimes not reliably accomplished, however,
this phenomenon can be avoided when using some appropriate discrete time
schemes, including predictor-corrector methods. In Kloeden and
Platen $\cite{KP99}$, predictor-corrector methods have been proposed
as weak discrete time approximations for solving SDEs, which
can be used in Monte Carlo simulation. For the strong discrete time
approximation of solutions of SDEs, a family of predictor-corrector
Euler methods has been developed in Bruti-Liberati and Platen
$\cite{BP8}$. However, there are no strong predictor-corrector
methods available for SDEwMSs yet.

In this paper, we develop a new family of strong predictor-corrector
Euler-Maruyama (PCEM) methods for SDEwMSs (\ref{eq:sdewms}), which
are shown to converge with strong order 0.5, and demonstrate their
performance by considering some examples.

The rest of the paper is arranged as follows. In Section 2 we
introduce some necessary notations and define a family of strong
predictor-corrector Euler-Maruyama approximate solutions to SDEwMSs.
In Section 3 we show that the PCEM solutions converge to the exact
solution in $L^2$ under the global Lipschitz condition and reveal
the order of convergence is 0.5. In Section 4 we extend the PCEM
convergence results to multi-dimensional case under certain
conditions. In Section 5 the numerical stability of SDEwMSs will be
introduced and discussed. Finally, in Section 6 some numerical
examples are given and compared for simulated paths with different
degrees of implicitness to illustrate the computational efficiency
of the predictor-corrector Euler-Maruyama approximation.

\section{Preliminary and algorithm}
Let $(\Omega,\mathcal{F},\{\mathcal{F}_t\}_{t\geq0},P)$ be a
complete probability space with a filtration
$\{\mathcal{F}_t\}_{t\geq0}$ satisfying the usual conditions.
Suppose that there is a finite set $S=\{1,2,...,N\}$, representing
the possible regimes of the environment. We work with a finite-time
horizon $[0,T]$ for some $T>0$. Let $f(\cdot,\cdot):\mathbb{R}^d
\times S \rightarrow \mathbb{R}^d$, $g(\cdot,\cdot):\mathbb{R}^d
\times S \rightarrow \mathbb{R}^{d\times d}$ be both Borel
measurable. Consider the dynamic system given by (1.1) with initial
value $y(0)=y_0\in\mathbb{R}^d$ and $r(0)=i_0\in S$, where
$W(t)=(W^{1}(t),\cdots,W^{d}(t))^T$ is a $d$-dimensional
$\mathcal{F}_t$-adapted standard Brownian motion, and $r(t)$ is a
continuous-time Markov chain taking value in a finite state space
$S=\{1,2,...,N\}$ with the generator $Q=(q_{ij})_{N\times N}$ given
by
\begin{equation*}
P\{r(t+\delta)=j|r(t)=i\}=\begin{cases}
q_{ij}\delta+o(\delta), &\text{if $i\neq j$},\\[.5cm]
1+q_{ii}\delta+o(\delta), &\text{if $i=j$},
\end{cases}
\end{equation*}
provided $\delta\downarrow 0$, and
\[-q_{ii}=\sum_{i\neq j}q_{ij}<+\infty.\]

 We assume $W(t)$ and $r(t)$ are independent. Throughout this paper,
 we denote by $|\cdot|$ the Euclidean norm for vectors or the trace
 norm for matrices.

\subsection{Existence and uniqueness}
Under certain conditions we can establish the existence of a
pathwise unique solution of (\ref{eq:sdewms}). Here we make the following global
Lipschitz (GL) and linear growth (LG) assumptions:\\[.2cm]
($\mathcal{H}$1) GL: For all $\ (x,i),(y,i)\in \mathbb{R}^d\times
S$, there exists a constant $L_1>0$ such that
\[|f(x,i)-f(y,i)|^2+|g(x,i)-g(y,i)|^2
\leq L_1|x-y|^2.\]\\
($\mathcal{H}$2) LG: For all $\ (x,i)\in
\mathbb{R}^d\times S$, there exists a constant $L_2>0$ such that
\[|f(x,i)|^2\vee |g(x,i)|^2\leq L_2(1+|x|^2).\]

\begin{Rem}It is easy to show that if $f(\cdot,\cdot),\
g(\cdot,\cdot)$ satisfy GL condition, then they also satisfy LG
condition, but for the convenience of the reader we preserve it.
\end{Rem}

The following theorem guarantee the existence and uniqueness of the
solution to equation (\ref{eq:sdewms}), which are of use in studying
some numerical schemes.
\begin{theorem}
If $f(x,i)$, $g(x,i)$ satisfy the conditions ($\mathcal{H}$1),
($\mathcal{H}$2), and suppose $W(t),r(t)$ be independent. Then there
exists a unique d-dimensional $\mathcal{F}_t$-adapted
right-continuous process $y(t)$ with left-hand limits which
satisfies equation (\ref{eq:sdewms}) such that $y(0)=y_0$ and $r(0)=i_0$ a.s.
\end{theorem}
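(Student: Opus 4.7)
The plan is to prove uniqueness first by a direct Gronwall-type estimate on the integral form of (\ref{eq:sdewms}), and then to construct a solution by exploiting the piecewise-constant nature of $r(t)$ between its jump times: on every such interval the equation reduces to a classical It\^o SDE to which standard theory applies, after which the pieces can be glued together.

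For uniqueness, suppose $y_1,y_2$ are both $\mathcal{F}_t$-adapted solutions sharing the initial data $(y_0,i_0)$. Subtracting the integral forms of (\ref{eq:sdewms}), applying the Cauchy--Schwarz inequality to the drift integral and the It\^o isometry to the martingale integral, and invoking ($\mathcal{H}$1) uniformly in $i\in S$, I would arrive at
\[
E|y_1(t)-y_2(t)|^2\;\le\;C\int_0^t E|y_1(s)-y_2(s)|^2\,ds
\]
with $C=C(T,L_1)$ independent of the realisation of $r(\cdot)$, since the Lipschitz constant is the same for every mode. Gronwall's lemma then forces $y_1\equiv y_2$ almost surely on $[0,T]$.

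For existence, let $0=\tau_0<\tau_1<\tau_2<\cdots$ be the successive jump times of the Markov chain. Because $S$ is finite and the holding rates $-q_{ii}$ are bounded, almost surely only finitely many $\tau_k$ lie in $[0,T]$, and each $\tau_k$ is an $\mathcal{F}_t$-stopping time. On each random interval $[\tau_k,\tau_{k+1})$ the mode is frozen at some $i_k\in S$, so (\ref{eq:sdewms}) reduces on this interval to the classical It\^o SDE
\[
y(t)=y(\tau_k)+\int_{\tau_k}^{t}f(y(s),i_k)\,ds+\int_{\tau_k}^{t}g(y(s),i_k)\,dW(s),
\]
whose frozen coefficients still satisfy ($\mathcal{H}$1) and ($\mathcal{H}$2) with the same constants. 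Standard Picard iteration in the It\^o setting then produces a unique continuous $\mathcal{F}_t$-adapted solution on $[\tau_k,\tau_{k+1}\wedge T]$ whose second moment is controlled by that of $y(\tau_k)$.

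Starting from $y(\tau_0)=y_0$ and iteratively using $y(\tau_k)$ as the initial datum at stage $k$, concatenating these local pieces yields a process $y$ defined on the whole of $[0,T]$ that is continuous on each $[\tau_k,\tau_{k+1}\wedge T]$, hence right-continuous with left limits; $\{\mathcal{F}_t\}$-adaptedness is inherited because each $\tau_k$ is a stopping time and the $k$-th piece is built from $\mathcal{F}_{\tau_k}$-measurable data. I expect the main technical hurdle to be the bookkeeping required to confirm adaptedness across the random glueing points and to verify that the second moments do not blow up along the almost surely finite chain of concatenations; the latter is handled by iterating a standard Gronwall estimate on each segment using ($\mathcal{H}$2).
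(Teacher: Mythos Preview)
Your proposal is correct and follows precisely the standard interlacing argument: uniqueness via a Gronwall estimate using ($\mathcal{H}$1), and existence by freezing the mode between successive jump times of $r(t)$ and applying classical It\^o SDE theory on each segment. This is in fact the approach of Theorem~3.13 in Mao and Yuan \cite{MY6}, which the paper simply cites without reproducing any details; you have supplied the sketch that the paper omits.

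Two minor remarks. First, the solution $y(t)$ is actually continuous, not merely c\`adl\`ag, since both $\int_0^t f(y(s),r(s))\,ds$ and $\int_0^t g(y(s),r(s))\,dW(s)$ are continuous in $t$; the theorem statement is just weaker than necessary. Second, the point you flag as the main technical hurdle---running classical SDE theory on the \emph{random} interval $[\tau_k,\tau_{k+1})$---is handled in Mao--Yuan by conditioning on $\mathcal{F}_{\tau_k}$ and invoking the independence of $W$ and $r$ together with the strong Markov property, so that given $\mathcal{F}_{\tau_k}$ the frozen-coefficient SDE is driven by the Brownian motion $W(\tau_k+\cdot)-W(\tau_k)$, independent of the next holding time.
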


\begin{proof}
See Theorem 3.13 in Mao and Yuan $\cite{MY6}$.
\end{proof}

\subsection{Algorithm} Now we turn our attention to numerical
algorithm. For convenience, we first consider one-dimensional
SDEwMSs. Given $\Delta>0$ as a step size, denote
$\{t_i\}_{i\geq1}$ the usual equidistant time discretization of a
bounded interval $[0,T]$, i.e. $t_0=0$,
 $t_i-t_{i-1}=\Delta$, if $t_{n-1}<T\leq t_n$ then set
 $t_n=T$. Denote $\Delta W_{t_k}=W({t_{k+1}})-W({t_k})$.

For given partition $\{t_k\}_{k\geq 1}$, $\{r(t_k)\}_{k\geq1}$ is a
discrete Markov chain with transition probability matrix
$(P(i,j))_{N\times N}$, here $P(i,j)= P(r(t_{k+1})=j|r(t_{k})=i)$ is
the $ij$th entry of the matrix $e^{(t_{k+1}-t_{k})Q}$, thus we could
use following recursion procedure to simulate the discrete Markov
chain $\{r(t_k)\}_{k\geq1}$, suppose $r(t_k)=i_1$ and generate a
random number $\xi$ which is uniformly distributed in $[0,1]$, then
we define
\begin{equation*} r(t_{k+1})=\begin{cases}
i_2, &\text{if $\ i_2\in S-\{N\}$ and $\ \sum^{i_2-1}_{j=1}P(i_1,j)\leq\xi<\sum^{i_2}_{j=1}P(i_1,j)$},\\[.5cm]
N, &\text{if $\ \sum^{N-1}_{j=1}P(i_1,j)\leq\xi.$}
\end{cases}
\end{equation*}
Repeating this procedure a trajectory of $\{r(t_k)\}_{k\geq1}$ can
be simulated.

Now we can introduce a new family of strong predictor-corrector
Euler-Maruyama(PCEM) methods for SDEwMSs. Given initial value
$Y_{t_{0}}=y_0\in\mathbb{R}$ and $r_{t_{0}}=i_0\in S$, the proposed
family of strong PCEM is given by the predictor
\begin{equation}\label{eq:pcemp}
\begin{split}
\widetilde{Y}_{t_{k+1}}&=Y_{t_k}+f(Y_{t_k},r_{t_k})\Delta+g(Y_{t_k},r_{t_k})\Delta W_{t_k},\\
\end{split}
\end{equation}
and by the corrector
\begin{equation}\label{eq:pcemc}
\begin{split}
Y_{t_{k+1}}=&Y_{t_k}+\{\theta
\bar{f}_\eta(\widetilde{Y}_{t_{k+1}},r_{t_k})+(1-\theta)\bar{f}_\eta(Y_{t_k},r_{t_k})\}\Delta\\
&+\{\eta g(\widetilde{Y}_{t_{k+1}},r_{t_k})+(1-\eta)g(Y_{t_k},r_{t_k})\}\Delta W_{t_k}.\\
\end{split}
\end{equation}
Here parameters $\theta, \eta\in[0,1]$ denote the degree of
implicitness in the drift and the diffusion coefficients,
respectively, and $\bar{f}_\eta(x,i)$ is defined as
\begin{equation}\label{eq:cdfun}
\bar{f}_\eta(x,i)=f(x,i)-\eta g(x,i)\frac{\partial g(x,i)}{\partial
x},\eta\in[0,1],
\end{equation}
which is called the corrected drift function. This scheme can be
written in the form
\begin{equation}\label{eq:pcems}
\begin{split}
Y_{t_{k+1}}&=Y_{t_k}+f(Y_{t_k},r_{t_k})\Delta+g(Y_{t_k},r_{t_k})\Delta W_{t_k}+\sum_{l=1}^{4}R_{l,k},\\
\end{split}
\end{equation}
where
\begin{equation}\label{eq:pcemr1}
\begin{split}
R_{1,k}&=\theta \{f(\widetilde{Y}_{t_{k+1}},r_{t_k})-f(Y_{t_k},r_{t_k})\}\Delta,\\
\end{split}
\end{equation}
\begin{equation}\label{eq:pcemr2}
\begin{split}
R_{2,k}&=-\theta\eta
\{g(\widetilde{Y}_{t_{k+1}},r_{t_k})\frac{\partial
g(\widetilde{Y}_{t_{k+1}},r_{t_k})}{\partial
x}-g(Y_{t_k},r_{t_k})\frac{\partial g(Y_{t_k},r_{t_k})}{\partial
x}\}\Delta,\\
\end{split}
\end{equation}
\begin{equation}\label{eq:pcemr3}
\begin{split}
R_{3,k}&=-\eta g(Y_{t_k},r_{t_k})\frac{\partial
g(Y_{t_k},r_{t_k})}{\partial
x}\Delta,\\
\end{split}
\end{equation}
\begin{equation}\label{eq:pcemr4}
\begin{split}
R_{4,k}&=\eta
\{g(\widetilde{Y}_{t_{k+1}},r_{t_k})-g(Y_{t_k},r_{t_k})\}\Delta W_{t_k}.\\
\end{split}
\end{equation}

For each $t\in [t_k,t_{k+1})$, let
\begin{equation}\label{eq:instead}
\begin{split}
\bar{Y}(t)=Y_{t_k},\bar{r}(t)=r_{t_k},\bar{R}_l
(t)=\frac{R_{l,k}}{\Delta},l=1,2,3, \bar{R}_4
(t)=\frac{R_{4,k}}{\Delta W_{t_k}}.\
\end{split}
\end{equation}

Therefore, we can define the continuous approximation solution
$Y(t)$ on the entire interval $[0,T]$ by
\begin{equation}\label{eq:cas}
\begin{split}
Y(t)=y_0&+\int_{0}^{t}f(\bar{Y}(s),\bar{r}(s))ds+\int_{0}^{t}g(\bar{Y}(s),\bar{r}(s))dW(s)\\
&+\sum_{l=1}^{3}\int_{0}^{t}\bar{R}_l(s)ds+\int_{0}^{t}\bar{R}_4(s)dW(s).
\end{split}
\end{equation}

Note that $Y(t_k)=\bar{Y}(t_k)=Y_{t_k}$, which means $Y(t)$ and
$\bar{Y}(t)$ coincide with the discrete approximate solution at the
gridpoints.

\begin{Rem}
The major advantage of the above PCEM approximate schemes is that
there are flexible degrees of implicitness parameters $\theta$ and
$\eta$ to choose for simulating paths properly. For the case
$\theta=\eta=0$ one recovers the Euler-Maruyama scheme which is well
discussed in Yuan and Mao $\cite{YM4}$.
\end{Rem}

\section{Convergence with the global Lipschitz(GL) and linear growth(LG) conditions}
In this section, we will prove that the numerical solution $Y(t)$
converges to the exact solution $y(t)$ in $L^2$ as step size
$\Delta\downarrow0$, and the order of convergence is one-half. To
begin with, we need the following GL condition and LG condition for
$\bar{f}_\eta(\cdot,\cdot)$.\\

\noindent($\mathcal{H}$3) GL: For all $\ (x,i),(y,i)\in
\mathbb{R}\times S$, there exists a constant $L_3>0$ such that
\[|\bar{f}_\eta(x,i)-\bar{f}_\eta(y,i)|^2\leq L_3|x-y|^2.\]
\noindent($\mathcal{H}$4) LG: For all $\ (x,i)\in \mathbb{R}\times
S$, there exists a constant $L_4>0$ such that
\[|\bar{f}_\eta(x,i)|^2\leq L_4(1+|x|^2).\]

We are now ready to present the key results of this section which
are stated as following.
\begin{theorem}\label{theo31}
Assume the SDEwMSs (\ref{eq:sdewms}) defined on $(\Omega,\mathcal{F},\{\mathcal{F}_t\}_{t\geq0},P)$ satisfying\\[.5cm]
\textbf{a}) $W(t),r(t)$ are independent,\\[.5cm]
\textbf{b}) $f(\cdot,\cdot), g(\cdot,\cdot), \bar{f}_\eta(\cdot,\cdot)$ satisfy conditions
$(\mathcal{H}1)$, $(\mathcal{H}2)$, $(\mathcal{H}3)$ and $(\mathcal{H}4)$, \\[.5cm]
then the unique strong solution $y(t)$ and numerical solution $Y(t)$
obtained in section 2.2 satisfying:

\begin{equation}\label{eq:theo1}
E(\sup_{0\leq t\leq T}|Y(t)-y(t)|^2)\leq C\Delta+o(\Delta),
\end{equation}
where $C$ is a positive constant independent of $\Delta$.
\end{theorem}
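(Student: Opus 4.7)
The plan is to subtract the integral equation for $y(t)$ from the representation (\ref{eq:cas}) of the continuous-time interpolant $Y(t)$, apply $\sup_{0\leq u\leq t}|\cdot|^2$ and $E$, and close a Gronwall-type inequality. The decomposition
\begin{equation*}
Y(t) - y(t) = \int_0^t\!\bigl[f(\bar{Y},\bar{r}) - f(y,r)\bigr]\,ds + \int_0^t\!\bigl[g(\bar{Y},\bar{r}) - g(y,r)\bigr]\,dW + \sum_{l=1}^{3}\int_0^t\!\bar R_l\, ds + \int_0^t\!\bar R_4\, dW
\end{equation*}
lets me handle the four correction integrals arising from the residuals $R_{l,k}$ separately; the two main terms are attacked by the usual Lipschitz/BDG route after splitting the mode-dependence
\begin{equation*}
f(\bar Y(s),\bar r(s)) - f(y(s),r(s)) = \bigl[f(\bar Y,\bar r) - f(y,\bar r)\bigr] + \bigl[f(y,\bar r) - f(y,r)\bigr],
\end{equation*}
and similarly for $g$. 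The first bracket is controlled by $\sqrt{L_1}\,|\bar Y - y|$ via $(\mathcal{H}1)$; the second uses $(\mathcal{H}2)$ together with the generator estimate $P(\bar r(s)\neq r(s))\leq C\Delta+o(\Delta)$, which follows uniformly from the asymptotics of the transition probabilities and yields a contribution of order $\Delta+o(\Delta)$.

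The preparatory estimates I will need are: a uniform second-moment bound $\sup_k E|Y_{t_k}|^2<\infty$, obtained from a discrete Gronwall argument applied to (\ref{eq:pcems}) under $(\mathcal{H}2)$ and $(\mathcal{H}4)$; the one-step predictor bound $E|\widetilde Y_{t_{k+1}} - Y_{t_k}|^2 \leq C\Delta$, which follows directly from (\ref{eq:pcemp}) and $(\mathcal{H}2)$; and the step-continuity bound $E|Y(s) - \bar Y(s)|^2 \leq C\Delta$ for $s\in[0,T]$. With these in hand, $(\mathcal{H}1)$, $(\mathcal{H}3)$ and the identity $\eta g\partial_x g = f - \bar f_\eta$ (which makes $g\partial_x g$ Lipschitz whenever $\eta>0$) give $E|R_{1,k}|^2,\,E|R_{2,k}|^2 \leq C\Delta^3$ and $E|R_{4,k}|^2\leq C\Delta^2$, so that $\int_0^t\bar R_1\,ds$, $\int_0^t\bar R_2\,ds$, and $\int_0^t\bar R_4\,dW$ each contribute $O(\Delta)$ to $E\sup_{0\leq u\leq t}|\cdot|^2$ after Cauchy--Schwarz or BDG.

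The main obstacle is the integral $\int_0^t\bar R_3\,ds = -\eta\int_0^t g(\bar Y)\partial_x g(\bar Y)\,ds$: individually $|R_{3,k}|$ is of order only $\Delta$, so $\sum_k R_{3,k}$ is a priori $O(1)$ rather than the required $O(\sqrt\Delta)$. The cancellation must come from the conditional mean of the stochastic integral $\int_0^t\bar R_4\,dW$: a one-step It\^o--Taylor expansion of $g(\widetilde Y_{t_{k+1}}) - g(Y_{t_k})$ yields $E[R_{4,k}\mid\mathcal F_{t_k}] = \eta g(Y_{t_k})\partial_x g(Y_{t_k})\Delta + O(\Delta^{3/2})$, exactly compensating $R_{3,k}$; thus $\sum_k(R_{3,k}+R_{4,k})$ splits as a martingale of variance $O(\Delta)$ plus an $L^2$-remainder of order $\sqrt\Delta$, and contributes only $O(\Delta)$ to the squared sup-norm. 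Tracking this cancellation under the mild regularity supplied by $(\mathcal H3)$ alone is the delicate step. Once it is dispatched, assembling all bounds gives
\begin{equation*}
E\Bigl(\sup_{0\leq u\leq t}|Y(u)-y(u)|^2\Bigr)\leq C\Delta + o(\Delta) + C\int_0^t\! E\Bigl(\sup_{0\leq u\leq s}|Y(u)-y(u)|^2\Bigr)\,ds,
\end{equation*}
to which Gronwall's inequality applies and produces (\ref{eq:theo1}).
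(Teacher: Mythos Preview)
Your overall architecture matches the paper's: subtract the integral representations, split $f(\bar Y,\bar r)-f(y,r)$ (and likewise $g$) into a Lipschitz piece and a mode-mismatch piece handled by the Yuan--Mao conditional-independence argument, control the residual sums coming from the $R_{l,k}$, and close with Gronwall. The paper organizes the residual estimates into Lemma~\ref{lem3} and Lemma~\ref{lem4} and then runs exactly the Gronwall step you outline.

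The one genuine difference is the treatment of $R_{3,k}$. The paper bounds each $\sum_k R_{l,k}$ separately and asserts in (\ref{eq:lem34}) that $E\bigl[\max_n\bigl|\sum_{k<n}R_{3,k}\bigr|^2\bigr]\leq C\Delta$ ``by the LG condition and Lemma~\ref{lem2}''. As you correctly flag, linear growth alone only gives $\bigl|\sum_{k<n}R_{3,k}\bigr|\leq C\eta\,(1+\sup_k|Y_{t_k}|)\,t_n$, an $O(1)$ quantity, so that step in the paper is not justified as written. Your route---pairing $R_{3,k}$ with the $\mathcal F_{t_k}$-conditional mean of $R_{4,k}$ so that $\sum_k(R_{3,k}+R_{4,k})$ becomes a martingale increment sum plus an $O(\sqrt\Delta)$ remainder---is the correct mechanism and effectively repairs this gap; it is precisely the cancellation encoded in the corrected drift $\bar f_\eta$. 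Your caveat that extracting this under $(\mathcal H3)$ alone needs care is well taken, since a clean Taylor remainder for $g(\widetilde Y)-g(Y)$ wants a bit more regularity on $g$ than Lipschitz.
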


In order to give the proof of this theorem, we first provide a
number of useful lemmas. The first two lemmas show that the
continuous approximation has bounded moments in a strong sense. The
latter lemmas play an important role in proving the strong
convergence result, which mainly refer to Bruti-Liberati and Platen
$\cite{BP8}$.

\begin{Lem}\label{lem1}
Under conditions $(\mathcal{H}1)$, $(\mathcal{H}2)$,
$(\mathcal{H}3)$ and $(\mathcal{H}4)$, for any $p\geq 2$, there
exists a constant $M$ which is dependent on $p$, $T$, $L$ and $y_0$,
but independent of $\Delta$, such that
\begin{equation}\label{eq:lem1}
E(\sup_{0\leq t\leq T}|Y(t)|^p)\leq M.
\end{equation}
\end{Lem}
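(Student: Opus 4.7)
The plan is to bound the $p$-th moment of the continuous interpolation $Y(t)$ via the integral representation (\ref{eq:cas}), control each term using (H1)--(H4) together with H\"older and Burkholder--Davis--Gundy (BDG) inequalities, and close with a Gronwall argument.

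First, I would raise (\ref{eq:cas}) to the $p$-th power, take $\sup_{0\le u\le t}$ and then expectation, and use $|a_1+\cdots+a_6|^p\le 6^{p-1}\sum|a_i|^p$ to separate the contributions of $y_0$, the $f$-integral, the $g$-stochastic integral, and the four corrector integrals. H\"older's inequality gives $\sup_{u\le t}|\int_0^u h\,ds|^p\le T^{p-1}\int_0^t|h|^p\,ds$ for the deterministic pieces, while the BDG inequality followed by H\"older yields $E\sup_{u\le t}|\int_0^u h\,dW|^p\le C_p T^{p/2-1}\int_0^t E|h|^p\,ds$ for the stochastic pieces. The linear growth hypothesis (H2) then bounds $|f|^p+|g|^p\le C(1+|\bar Y(s)|^p)$.

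For the corrector pieces, I would exploit the algebraic identities
\[
R_{1,k}+R_{2,k}=\theta\bigl[\bar f_\eta(\widetilde Y_{t_{k+1}},r_{t_k})-\bar f_\eta(Y_{t_k},r_{t_k})\bigr]\Delta,\qquad R_{3,k}=\bigl[\bar f_\eta(Y_{t_k},r_{t_k})-f(Y_{t_k},r_{t_k})\bigr]\Delta,
\]
which follow directly from the definition (\ref{eq:cdfun}) of $\bar f_\eta$. This recasts the otherwise inconvenient cross-product $g\,\partial_x g$ into quantities controlled by (H3)--(H4). For $R_{4,k}$, the global Lipschitz hypothesis (H1) on $g$ gives $|g(\widetilde Y_{t_{k+1}})-g(Y_{t_k})|\le L_1^{1/2}|\widetilde Y_{t_{k+1}}-Y_{t_k}|$, and the predictor itself satisfies $E|\widetilde Y_{t_{k+1}}|^p\le C(1+E|Y_{t_k}|^p)$ using (H2), the independence of $\Delta W_{t_k}$ from $\mathcal F_{t_k}$, and $E|\Delta W_{t_k}|^p\le c_p\Delta^{p/2}$. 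Since $\bar Y(s)=Y_{t_k}$ for $s\in[t_k,t_{k+1})$, one has $|\bar Y(s)|\le \sup_{0\le u\le s}|Y(u)|$, and assembling all estimates produces
\[
E\sup_{0\le u\le t}|Y(u)|^p\le C_1+C_2\int_0^t E\sup_{0\le u\le s}|Y(u)|^p\,ds,
\]
with $C_1,C_2$ independent of $\Delta\in(0,\Delta_0]$. Gronwall's lemma then delivers (\ref{eq:lem1}) with $M=C_1e^{C_2 T}$.

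The main technical obstacle is the corrector $R_{2,k}$, whose integrand $g\,\partial_x g$ is not covered by (H1)--(H2) on $g$ alone; the identity telescoping $R_1+R_2$ into a difference of $\bar f_\eta$'s, together with the auxiliary hypotheses (H3)--(H4), are precisely what unlock a clean LG-type bound. A secondary subtlety is that $\bar R_4(s)$ depends on $\widetilde Y_{t_{k+1}}$ and therefore fails to be $\mathcal F_s$-adapted on $[t_k,t_{k+1})$; for moment purposes the corresponding integral must be interpreted as the discrete sum $\sum_k\eta[g(\widetilde Y_{t_{k+1}})-g(Y_{t_k})]\Delta W_{t_k}$ and bounded termwise by Cauchy--Schwarz combined with the predictor moment estimate above.
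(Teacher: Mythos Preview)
The paper does not give a proof of this lemma at all: it simply writes ``We omit the proof since it is similar to Lemma~4.1 in Mao and Yuan~\cite{MY6}.'' That reference treats the plain Euler--Maruyama scheme, where the standard H\"older/BDG + linear-growth + Gronwall argument applies directly to the two-term integral representation. Your proposal is exactly this argument, extended in the natural way to accommodate the four corrector terms $R_{1,k},\dots,R_{4,k}$ that distinguish the PCEM scheme from EM; so your approach is the one the paper implicitly has in mind, only spelled out.

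Two points in your write-up go beyond what the paper indicates and are worth noting. First, your algebraic identities $R_{1,k}+R_{2,k}=\theta[\bar f_\eta(\widetilde Y_{t_{k+1}},r_{t_k})-\bar f_\eta(Y_{t_k},r_{t_k})]\Delta$ and $R_{3,k}=[\bar f_\eta(Y_{t_k},r_{t_k})-f(Y_{t_k},r_{t_k})]\Delta$ are correct and explain cleanly why hypotheses $(\mathcal H3)$--$(\mathcal H4)$ are needed: without them the raw $g\,\partial_x g$ term in $R_{2,k}$ and $R_{3,k}$ has no a~priori growth control. The paper never makes this reduction explicit. Second, your observation that $\bar R_4(s)$ is not $\mathcal F_s$-adapted on $[t_k,t_{k+1})$---because $\widetilde Y_{t_{k+1}}$ already contains $\Delta W_{t_k}$---is a genuine technical point that the paper's continuous representation~(\ref{eq:cas}) glosses over; your remedy of estimating the discrete sum $\sum_k R_{4,k}$ termwise is the correct workaround, and is in fact how the paper itself handles $R_{4,k}$ later in Lemma~\ref{lem3}.
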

\noindent We omit the proof since it is similar to Lemma 4.1 in Mao
and Yuan $\cite{MY6}$.

\begin{Lem}\label{lem2}
Under conditions $(\mathcal{H}1)$, $(\mathcal{H}2)$,
$(\mathcal{H}3)$ and $(\mathcal{H}4)$, there exists a constant $M$
which is dependent on $T$, $L$ and $y_0$, but independent of
$\Delta$, such that
\begin{equation}\label{eq:lem2}
E(\max_{0\leq k\leq n_T}|Y_{t_k}|^2)\leq M.
\end{equation}
\end{Lem}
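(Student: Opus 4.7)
The plan is to iterate the corrector recursion, bound the discrete $L^{2}$-norm of $Y_{t_{k}}$ via Cauchy--Schwarz on the drift-type increments and Doob's martingale maximal inequality on the genuine martingale part, and close the estimate with the discrete Gronwall lemma. The subtlety is that the corrector diffusion $\eta g(\widetilde{Y}_{t_{k+1}},r_{t_{k}})\Delta W_{t_{k}}$ is not an $\mathcal{F}_{t_{k}}$-martingale increment, because the predictor $\widetilde{Y}_{t_{k+1}}$ already depends on $\Delta W_{t_{k}}$; this is the main obstacle and I would handle it by exploiting the decomposition (\ref{eq:pcems}), which isolates a pure Euler--Maruyama martingale part from residual terms $R_{l,j}$ that are higher order in $\Delta$.

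First I would establish the one-step predictor bounds
\[
E|\widetilde{Y}_{t_{k+1}}|^{2}\le C_{1}\bigl(1+E|Y_{t_{k}}|^{2}\bigr),\qquad E|\widetilde{Y}_{t_{j+1}}-Y_{t_{j}}|^{2}\le C_{1}\Delta\bigl(1+E|Y_{t_{j}}|^{2}\bigr),
\]
by squaring (\ref{eq:pcemp}), invoking $(\mathcal{H}2)$, and using $E|\Delta W_{t_{k}}|^{2}=\Delta\le T$. This turns every subsequent appearance of $\widetilde{Y}$ into a quantity controlled by $Y$.

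Iterating (\ref{eq:pcems}) yields
\[
Y_{t_{k}}=y_{0}+\sum_{j=0}^{k-1}f(Y_{t_{j}},r_{t_{j}})\Delta+\sum_{j=0}^{k-1}g(Y_{t_{j}},r_{t_{j}})\Delta W_{t_{j}}+\sum_{l=1}^{4}\sum_{j=0}^{k-1}R_{l,j}.
\]
Squaring, using the elementary inequality $(\sum_{i=1}^{7}a_{i})^{2}\le 7\sum a_{i}^{2}$, and taking the maximum over $0\le k\le n_{T}$, I would bound each block separately. Cauchy--Schwarz in $j$ controls the drift sum by $T\sum_{j<k}|f(Y_{t_{j}},r_{t_{j}})|^{2}\Delta$; Doob's maximal inequality together with the It\^{o} isometry (applied to the genuine martingale $M_{k}:=\sum_{j<k}g(Y_{t_{j}},r_{t_{j}})\Delta W_{t_{j}}$) bounds $E\max_{k}|M_{k}|^{2}$ by $4\sum_{j<n_{T}}E|g(Y_{t_{j}},r_{t_{j}})|^{2}\Delta$; and for each $l\in\{1,2,3,4\}$ a Cauchy--Schwarz in $j$ bounds $\max_{k}|\sum_{j<k}R_{l,j}|^{2}$ by $(T/\Delta)\sum_{j<n_{T}}|R_{l,j}|^{2}$. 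Combined with $(\mathcal{H}1)$--$(\mathcal{H}4)$ -- together with the identity $g\,\partial_{x}g=\eta^{-1}(f-\bar{f}_{\eta})$ for $\eta\in(0,1]$, which transfers linear growth and a Lipschitz bound from $f,\bar{f}_{\eta}$ to $g\,\partial_{x}g$, while the terms vanish at $\eta=0$ -- and the predictor estimates above, each $E|R_{l,j}|^{2}$ is at most $C\Delta^{2}(1+E|Y_{t_{j}}|^{2})$.

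Assembling these estimates produces the discrete Gronwall-type relation
\[
E\Bigl[\max_{0\le k'\le k}|Y_{t_{k'}}|^{2}\Bigr]\le C_{2}+C_{3}\Delta\sum_{j=0}^{k-1}E\Bigl[\max_{0\le k'\le j}|Y_{t_{k'}}|^{2}\Bigr],
\]
from which the discrete Gronwall inequality gives $E[\max_{0\le k\le n_{T}}|Y_{t_{k}}|^{2}]\le C_{2}\exp(C_{3}T)=:M$, independent of $\Delta$ as required. The step that actually makes the argument go through is the a priori predictor bound in the second paragraph: without it, the nonlinear occurrences of $\widetilde{Y}_{t_{j+1}}$ inside $R_{1,j},\dots,R_{4,j}$ would prevent closing the recursion on the $Y_{t_{j}}$ moments alone.
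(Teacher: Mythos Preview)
Your argument is correct, but it is a genuinely different route from the paper's. In the paper, Lemma~\ref{lem2} is obtained in one line as an immediate corollary of Lemma~\ref{lem1}: since the continuous interpolation satisfies $Y(t_{k})=Y_{t_{k}}$, the discrete maximum $\max_{0\le k\le n_{T}}|Y_{t_{k}}|^{2}$ is dominated by $\sup_{0\le t\le T}|Y(t)|^{2}$, and the $p=2$ case of Lemma~\ref{lem1} gives the bound. The real work is thus shifted entirely to Lemma~\ref{lem1}, whose proof the paper omits by reference to Mao--Yuan~\cite{MY6}.

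Your approach, by contrast, works directly at the discrete level: you iterate the decomposition (\ref{eq:pcems}), separate the genuine Euler--Maruyama martingale increment from the residuals $R_{l,j}$, use the predictor estimate to reduce every occurrence of $\widetilde{Y}$ to $Y$, bound $E|R_{l,j}|^{2}\le C\Delta^{2}(1+E|Y_{t_{j}}|^{2})$ via $(\mathcal{H}1)$--$(\mathcal{H}4)$ together with the identity $g\partial_{x}g=\eta^{-1}(f-\bar f_{\eta})$, and close with discrete Gronwall. This is self-contained and avoids both the continuous interpolation and the appeal to~\cite{MY6}; in fact it is essentially the $p=2$, grid-restricted version of what a full proof of Lemma~\ref{lem1} would require. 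The paper's route is shorter on the page and simultaneously yields all $p\ge 2$ moments, at the cost of delegating the substantive estimate to an external reference; your route makes the mechanism transparent for the case actually needed.
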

\noindent This is an immediate result of Lemma \ref{lem1}, since
$Y(t_k)=\bar{Y}(t_k)=Y_{t_k}$.

\begin{Lem}\label{lem3}
There exists a constant $C$ which is dependent on $T$, $L$ and
$y_0$, but independent of $\Delta$, such that
\begin{equation}\label{eq:lem3}
\sum_{l=1}^{4}E[\max_{1\leq n\leq n_T}|\sum_{0\leq k\leq
n-1}R_{l,k}|^2]\leq C\Delta.
\end{equation}
\end{Lem}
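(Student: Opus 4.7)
The plan is to bound each running sum $\sum_{k<n}R_{l,k}$ in the $L^2$-maximal norm, treating $l=1,2$ by direct Lipschitz estimates and pairing $l=3,4$ to exploit a martingale-drift cancellation hidden in the corrector design.

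The foundational ingredient is the one-step prediction estimate $E|\widetilde{Y}_{t_{k+1}} - Y_{t_k}|^2 \leq C\Delta$, which follows from the predictor (\ref{eq:pcemp}) by splitting drift from diffusion, applying ($\mathcal{H}$2), and using the uniform moment bound in Lemma \ref{lem2}. For $R_{1,k}$, ($\mathcal{H}$1) then yields $E|R_{1,k}|^2 \leq \theta^2 L_1 \Delta^2\, E|\widetilde{Y}_{t_{k+1}} - Y_{t_k}|^2 = O(\Delta^3)$, and a discrete Cauchy-Schwarz gives
\begin{equation*}
E\Bigl[\max_{1\leq n\leq n_T}\Bigl|\sum_{k<n}R_{1,k}\Bigr|^2\Bigr] \leq n_T\sum_{k=0}^{n_T-1}E|R_{1,k}|^2 = O(n_T^{2}\Delta^{3}) = O(\Delta).
\end{equation*}
The term $R_{2,k}$ is handled identically once we observe that $g\,\partial_x g = \eta^{-1}(f-\bar{f}_\eta)$ is globally Lipschitz by ($\mathcal{H}$1) and ($\mathcal{H}$3).

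The essential difficulty is $R_{3,k}$ and $R_{4,k}$: $R_{3,k}$ is a pure $\mathcal{F}_{t_k}$-measurable drift of order $\Delta$, so $|\sum_{k<n}R_{3,k}|$ is generically $O(1)$ and resists any isolated bound. The remedy is to pair it with $R_{4,k}$ via the Taylor expansion
\begin{equation*}
g(\widetilde{Y}_{t_{k+1}},r_{t_k}) - g(Y_{t_k},r_{t_k}) = \partial_x g(Y_{t_k},r_{t_k})\bigl(\widetilde{Y}_{t_{k+1}} - Y_{t_k}\bigr) + \rho_k
\end{equation*}
combined with the predictor identity $\widetilde{Y}_{t_{k+1}} - Y_{t_k} = f\Delta + g\Delta W_{t_k}$. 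Substituting into $R_{4,k}$ and collecting with $R_{3,k}$ produces
\begin{equation*}
R_{3,k}+R_{4,k} = \eta\,g\,\partial_x g\,[(\Delta W_{t_k})^{2} - \Delta] + \eta\,\partial_x g\cdot f\cdot\Delta\cdot\Delta W_{t_k} + \eta\rho_k\Delta W_{t_k}.
\end{equation*}
The first two summands are $\mathcal{F}_{t_{k+1}}$-martingale differences with conditional variances of order $\Delta^{2}$ and $\Delta^{3}$; Doob's $L^{2}$ maximal inequality together with orthogonality of martingale increments then delivers $O(n_T\Delta^{2}) = O(\Delta)$. The remainder $\eta\rho_k\Delta W_{t_k}$ is controlled by $|\rho_k|\leq C|\widetilde{Y}_{t_{k+1}} - Y_{t_k}|^{2}$ (where a little extra regularity of $g$, implicit in the existence of the corrected drift, is needed) combined with the one-step bound.

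The main obstacle is precisely recognizing this cancellation: the corrector is designed so that $-\eta g\,\partial_x g\,\Delta$ in $R_{3,k}$ matches the leading-order term of $E[R_{4,k}\mid\mathcal{F}_{t_k}]$, and only the joint sum exhibits the desired $O(\Delta)$ behavior. Once this martingale-drift structure has been exposed via the Taylor expansion, the remainder of the argument reduces to routine applications of discrete Cauchy-Schwarz, Doob's maximal inequality, and the moment estimates from Lemmas \ref{lem1}--\ref{lem2}.
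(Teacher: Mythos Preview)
Your treatment of $R_{1,k}$ and $R_{2,k}$ coincides with the paper's. For $R_{3,k}$ and $R_{4,k}$, however, your route diverges sharply from the paper, and with good reason. The paper bounds each of the four terms separately: for $R_{3,k}$ it simply asserts (equation (\ref{eq:lem34})) that the linear-growth condition and Lemma~\ref{lem2} yield $E[\max_n|\sum_{k<n}R_{3,k}|^2]\leq C\Delta$, and for $R_{4,k}$ it rewrites the increment as an It\^o integral $\int_{t_k}^{t_{k+1}}\bigl(g(\widetilde{Y}_{t_{k+1}},r_{t_k})-g(Y_{t_k},r_{t_k})\bigr)\,dW$ and invokes Doob's inequality together with the It\^o isometry. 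Both steps are delicate exactly where you say they are: the partial sums of $R_{3,k}$ form a Riemann-type sum of total length $n_T\Delta=T$ and are generically $O(1)$ rather than $O(\sqrt{\Delta})$, while the integrand in the $R_{4,k}$ rewrite anticipates the Brownian increment (since $\widetilde{Y}_{t_{k+1}}$ already depends on $\Delta W_{t_k}$), so Doob and the isometry are not directly available. Your pairing argument isolates precisely the cancellation $E[R_{4,k}\mid\mathcal{F}_{t_k}]\approx -R_{3,k}$ that resolves both issues at once, and is the more rigorous route to the desired order.

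Two small caveats. First, what you actually establish is the bound for $E\bigl[\max_n|\sum_{k<n}(R_{3,k}+R_{4,k})|^2\bigr]$ rather than for each summand in isolation as the lemma is literally phrased; this combined estimate is, however, all that the proof of Theorem~\ref{theo31} needs. Second, your Taylor-remainder control $|\rho_k|\leq C|\widetilde{Y}_{t_{k+1}}-Y_{t_k}|^2$ requires $\partial_x g$ to be Lipschitz, which is marginally stronger than $(\mathcal{H}1)$--$(\mathcal{H}4)$; you rightly flag this as an implicit extra regularity assumption on $g$.
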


\begin{proof}
By the Cauchy-Schwarz inequality and the GL condition, from equation
(\ref{eq:pcemr1}), we can obtain
\begin{equation}\label{eq:lem31}
\begin{split}
&E[\max_{1\leq n\leq n_T}|\sum_{0\leq k\leq
n-1}R_{1,k}|^2]\\
&=E[\max_{1\leq n\leq n_T}|\sum_{0\leq k\leq
n-1}\theta \{f(\widetilde{Y}_{t_{k+1}},r_{t_k})-f(Y_{t_k},r_{t_k})\}\Delta|^2]\\
&\leq E[\max_{1\leq n\leq n_T}(\sum_{0\leq k\leq
n-1}|\theta\Delta|^2)(\sum_{0\leq k\leq
n-1}|f(\widetilde{Y}_{t_{k+1}},r_{t_k})-f(Y_{t_k},r_{t_k})|^2)]\\
&\leq C\Delta E[(\sum_{0\leq k\leq n_T-1}\Delta)(\sum_{0\leq k\leq
n_T-1}|f(\widetilde{Y}_{t_{k+1}},r_{t_k})-f(Y_{t_k},r_{t_k})|^2)]\\
&\leq C\Delta E[\sum_{0\leq k\leq
n_T-1}|\widetilde{Y}_{t_{k+1}}-Y_{t_k}|^2]\\
&\leq C\Delta E[\sum_{0\leq k\leq n_{T-1}}
(E[|f(Y_{t_k},r_{t_k})\Delta|^2|\mathcal{F}_{t_k}]+E[|g(Y_{t_k},r_{t_k})\Delta
W_{t_k}|^2|\mathcal{F}_{t_k}])].\\
\end{split}
\end{equation}
Then by using the Cauchy-Schwarz inequality, the It\^{o}'s isometry,
the LG condition and Lemma \ref{lem2}, we get
\begin{equation}\label{eq:lem32}
\begin{split}
&E[\max_{1\leq n\leq n_T}|\sum_{0\leq k\leq
n-1}R_{1,k}|^2]\\
&\leq C\Delta
E[\int_0^{t_{n_T}}E((1+|Y_{t_{n_z}}|^2)|\mathcal{F}_{t_{n_z}})dz]\\
&\leq C\Delta
\int_0^{t_{n_T}}E(1+\max_{0\leq n\leq n_T}|Y_{t_n}|^2)dz\\
&\leq C\Delta.\\
\end{split}
\end{equation}
With the similar steps as in (\ref{eq:lem31}) and (\ref{eq:lem32}),
we have
\begin{equation}\label{eq:lem33}
\begin{split}
&E[\max_{1\leq n\leq n_T}|\sum_{0\leq k\leq n-1}R_{2,k}|^2]\leq \
C\Delta.\\
\end{split}
\end{equation}
It is also easy to show by the LG condition and Lemma \ref{lem2}
that
\begin{equation}\label{eq:lem34}
\begin{split}
&E[\max_{1\leq n\leq n_T}|\sum_{0\leq k\leq n-1}R_{3,k}|^2]\leq \
C\Delta.\\
\end{split}
\end{equation}
By using Doob's martingale inequality, the It\^{o}'s isometry, the
GL condition and equation (\ref{eq:pcemr4}), we have
\begin{equation}\label{eq:lem35}
\begin{split}
&E[\max_{1\leq n\leq n_T}|\sum_{0\leq k\leq n-1}R_{4,k}|^2]\\
&=E[\max_{1\leq n\leq n_T}|\sum_{0\leq k\leq
n-1}\eta\int_{t_k}^{t_k+1} (g(\widetilde{Y}_{t_{k+1}},r_{t_k})-g(Y_{t_k},r_{t_k}))dW(z)|^2]\\
&\leq CE[|\int_0^{T}(g(\widetilde{Y}_{t_{n_z+1}},r_{t_{n_z}})-g(Y_{t_{n_z}},r_{t_{n_z}}))dW(z)|^2]\\
&= C\int_0^{T}E[|g(\widetilde{Y}_{t_{n_z+1}},r_{t_{n_z}})-g(Y_{t_{n_z}},r_{t_{n_z}})|^2]dz\\
&\leq C\int_0^{T}E[|\widetilde{Y}_{t_{n_z+1}}-Y_{t_{n_z}}|^2]dz.\\
\end{split}
\end{equation}
Therefore, with similar steps as in (\ref{eq:lem31}) and
(\ref{eq:lem32}), we also have
\begin{equation}\label{eq:lem36}
\begin{split}
&E[\max_{1\leq n\leq n_T}|\sum_{0\leq k\leq n-1}R_{4,k}|^2]\\
&\leq C\int_0^{T}E[\int_{t_{n_z}}^{t_{n_z+1}}E[(1+|Y_{t_{n_z}}|^2)|\mathcal{F}_{t_{n_z}}]ds]dz\\
&\leq C\int_0^{T}E[\int_{t_{n_z}}^{t_{n_z+1}}E[(1+\max_{0\leq n\leq n_T}|Y_{t_n}|^2)|\mathcal{F}_{t_{n_z}}]ds]dz\\
&\leq C\Delta.\\
\end{split}
\end{equation}
Thus the required assertion follows.
\end{proof}

\begin{Lem}\label{lem4}
Under conditions $(\mathcal{H}1)$, $(\mathcal{H}2)$,
$(\mathcal{H}3)$ and $(\mathcal{H}4)$, then for any
$t\in[t_k,t_{k+1})$, we have
\begin{equation}\label{eq:lem4}
\sum_{l=1}^{3}E[\sup_{t_k\leq s\leq t}|\int_{t_k}^s
\frac{R_{l,k}}{\Delta}dz|^2]+E[\sup_{t_k\leq s\leq t}|\int_{t_k}^s
\frac{R_{4,k}}{\Delta W_{t_k}}dW(z)|^2]\leq o(\Delta).\\
\end{equation}
\end{Lem}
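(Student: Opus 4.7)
The plan is to treat the four summands separately by exploiting the fact that each integrand $\bar R_l(\cdot)$ is piecewise constant on the subinterval $[t_k,t_{k+1})$, so the integrals reduce to elementary expressions.

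For $l=1,2,3$, on $[t_k,t_{k+1})$ we have $\int_{t_k}^s \bar R_l(z)\,dz=(s-t_k)R_{l,k}/\Delta$, and since $t-t_k<\Delta$ this forces $\sup_{t_k\le s\le t}|\int_{t_k}^s \bar R_l\,dz|^2\le |R_{l,k}|^2$. It thus suffices to show $E|R_{l,k}|^2=o(\Delta)$. For $R_{1,k}$, the GL condition on $f$ gives $|R_{1,k}|^2\le \theta^2 L_1\Delta^2|\widetilde Y_{t_{k+1}}-Y_{t_k}|^2$; combined with the predictor-increment bound $E|\widetilde Y_{t_{k+1}}-Y_{t_k}|^2\le C\Delta$ (which comes from the definition of $\widetilde Y$, (H2), and Lemma \ref{lem2}, exactly as used in Lemma \ref{lem3}), this yields $E|R_{1,k}|^2\le C\Delta^3$. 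For $R_{2,k}$, the key observation is that $\eta g\,\partial_x g=f-\bar f_\eta$ is Lipschitz by (H1) and (H3), so the same argument gives $E|R_{2,k}|^2\le C\Delta^3$ (and $R_{2,k}\equiv 0$ when $\eta=0$). For $R_{3,k}$, combining (H2) and (H4) via $|\eta g\,\partial_x g|^2\le 2(|f|^2+|\bar f_\eta|^2)\le C(1+|Y_{t_k}|^2)$ with Lemma \ref{lem2} yields $E|R_{3,k}|^2\le C\Delta^2$. All three bounds are $o(\Delta)$.

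For $l=4$, on $[t_k,t_{k+1})$ the integrand equals the random constant $\eta(g(\widetilde Y_{t_{k+1}},r_{t_k})-g(Y_{t_k},r_{t_k}))$, so the expression collapses to this constant multiplied by $W(s)-W(t_k)$. The GL condition on $g$ and the Cauchy--Schwarz inequality yield
\[
E\Big[\sup_{t_k\le s\le t}\Big|\int_{t_k}^s \tfrac{R_{4,k}}{\Delta W_{t_k}}\,dW(z)\Big|^2\Big]\le \eta^2 L_1\big(E|\widetilde Y_{t_{k+1}}-Y_{t_k}|^4\big)^{1/2}\big(E\sup_{t_k\le s\le t_{k+1}}|W(s)-W(t_k)|^4\big)^{1/2}.
\]
Doob's $L^4$ inequality bounds the Brownian factor by $O(\Delta^2)$, and a fourth-moment version of the predictor-increment estimate, using (H2) and Lemma \ref{lem1} with $p=4$, bounds the first factor by $O(\Delta^2)$. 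The product is $O(\Delta^2)=o(\Delta)$.

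The main obstacle is the $R_{4,k}$ summand. The integrand $\bar R_4$ is \emph{not} $\mathcal{F}_s$-adapted for $s<t_{k+1}$ because it depends on the full increment $\Delta W_{t_k}$ through $\widetilde Y_{t_{k+1}}$, so $\int_{t_k}^s\bar R_4\,dW(z)$ must be interpreted pathwise (random constant times $W(s)-W(t_k)$) rather than as an It\^o integral. Consequently It\^o isometry is unavailable, forcing the fourth-moment Cauchy--Schwarz estimate above, which in turn requires the higher-moment control of the numerical solution supplied by Lemma \ref{lem1}; arranging these estimates cleanly is the subtlest part of the argument.
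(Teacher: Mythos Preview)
Your argument is correct. The paper does not give a detailed proof of this lemma; it merely states that the proof is similar to that of Lemma~\ref{lem3}. Your treatment of the $l=1,2,3$ terms is exactly the natural single-interval analogue of the estimates in Lemma~\ref{lem3}: reduce to $E|R_{l,k}|^2$ via the trivial bound $|(s-t_k)/\Delta|\le 1$, then invoke the Lipschitz and linear-growth conditions and the predictor-increment bound.

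Where you differ from the paper is in the handling of $R_{4,k}$. The paper's proof of Lemma~\ref{lem3} applies Doob's martingale inequality and the It\^o isometry to the stochastic integral with integrand $g(\widetilde Y_{t_{k+1}},r_{t_k})-g(Y_{t_k},r_{t_k})$, and by analogy one would expect the same here. You correctly point out that this integrand is \emph{not} $\mathcal F_s$-adapted on $[t_k,t_{k+1})$ because $\widetilde Y_{t_{k+1}}$ depends on the full increment $\Delta W_{t_k}$, so the It\^o isometry is not directly available. Your workaround---interpreting the integral pathwise as a random constant times $W(s)-W(t_k)$, then applying Cauchy--Schwarz and controlling fourth moments via Lemma~\ref{lem1} with $p=4$---is both rigorous and yields the required $O(\Delta^2)$ bound. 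This is more careful than the paper's own treatment (the same adaptedness issue is already present in the paper's proof of Lemma~\ref{lem3}), and it is precisely why the $p\ge 2$ moment bound in Lemma~\ref{lem1} is needed rather than just $p=2$.
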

\noindent The proof of this lemma is similar to that in Lemma \ref{lem3}.\\

Now we are in a position to prove our Theorem \ref{theo31}.

\noindent $Proof\  of\  Theorem\  \ref{theo31}:$ From equation
(\ref{eq:cas}), we have
\begin{equation}\label{eq:theo11}
\begin{split}
Z(t)&=E(\sup_{0\leq s\leq t}|Y(s)-y(s)|^2)\\
&=E[\sup_{0\leq s\leq
t}|\int_{0}^{s}(f(\bar{Y}(z),\bar{r}(z))-f(y(z),r(z)))dz\\
&\quad+\int_{0}^{s}(g(\bar{Y}(z),\bar{r}(z))-g(y(z),r(z)))dW(z)\\
&\quad+\sum_{l=1}^{3}\int_{0}^{s}\bar{R}_l(z)dz+\int_{0}^{s}\bar{R}_4(z)dW(z)|^2].\\
\end{split}
\end{equation}
Let $n=[t/\Delta]$, the integer part of $t/\Delta$. Then, by
H\"{o}lder inequality, Doob's martingale inequality and equation
(\ref{eq:instead}), we have
\begin{equation}\label{eq:theo12}
\begin{split}
Z(t)=&E(\sup_{0\leq s\leq t}|Y(s)-y(s)|^2)\\
\leq &CE\int_{0}^{t}|f(\bar{Y}(z),\bar{r}(z))-f(y(z),r(z))|^2dz\\
&+CE\int_{0}^{t}|g(\bar{Y}(z),\bar{r}(z))-g(y(z),r(z))|^2dz\\
&+C\sum_{l=1}^{4}E[\max_{1\leq m\leq n}|\sum_{0\leq k\leq
m-1}R_{l,k}|^2]+C\sum_{l=1}^{3}E[\sup_{t_k\leq s\leq
t}|\int_{t_k}^s\frac{R_{l,k}}{\Delta}dz|^2]\\
&+CE[\sup_{t_k\leq s\leq t}|\int_{t_k}^s\frac{R_{4,k}}{\Delta
W_{t_k}}dW(z)|^2].\\
\end{split}
\end{equation}
We focus on the last three parts of the right side. From Lemma
\ref{lem3} and Lemma \ref{lem4}, we have
\begin{equation}\label{eq:theo13}
\begin{split}
&C\sum_{l=1}^{4}E[\max_{1\leq m\leq n}|\sum_{0\leq k\leq
m-1}R_{l,k}|^2]+C\sum_{l=1}^{3}E[\sup_{t_k\leq s\leq
t}|\int_{t_k}^s\frac{R_{l,k}}{\Delta}dz|^2]\\
&+CE[\sup_{t_k\leq s\leq t}|\int_{t_k}^s\frac{R_{4,k}}{\Delta
W_{t_k}}dW(z)|^2]\leq C\Delta +o(\Delta).\\
\end{split}
\end{equation}
Let $I_G$ be the indicator function for set $G$. Let
$t\in[t_k,t_{k+1})$. From (\ref{eq:pcemp})--(\ref{eq:cas}), obviously,
we have $\bar{Y}(t_{k})$ and $I_{\{r(t)\neq r(t_k)\}}$ are
conditionally independent with respect to the $\sigma$-algebra
generated by $r(t_k)$. So by the same procedures as in Theorem 3.1
in Yuan and Mao $\cite{YM4}$, we can obtain
\begin{equation}\label{eq:theo14}
\begin{split}
&E\int_{0}^{t}|f(\bar{Y}(s),\bar{r}(s))-f(y(s),r(s))|^2ds\\
&\quad\leq2L^2\int_{0}^{t}E|\bar{Y}(s)-y(s)|^2ds+C\Delta+o(\Delta),\\
\end{split}
\end{equation}
\begin{equation}\label{eq:theo15}
\begin{split}
&E\int_{0}^{t}|b(\bar{Y}(s),\bar{r}(s))-b(y(s),r(s))|^2ds \\
&\quad\leq2L^2\int_{0}^{t}E|\bar{Y}(s)-y(s)|^2ds+C\Delta+o(\Delta).\\
\end{split}
\end{equation}
Substituting (\ref{eq:theo13}), (\ref{eq:theo14}), (\ref{eq:theo15}) into (\ref{eq:theo12}) shows that
\begin{equation}\label{eq:theo16}
\begin{split}
E(\sup_{0\leq s\leq t}|Y(s)-y(s)|^2)&\leq
C\int_{0}^{t}E|\bar{Y}(s)-y(s)|^2ds+C\Delta+o(\Delta).
\end{split}
\end{equation}
Note that
\begin{equation}\label{eq:theo17}
E|\bar{Y}(s)-y(s)|^2\leq 2E|Y(s)-y(s)|^2+2E|\bar{Y}(s)-Y(s)|^2.
\end{equation}
Suppose $t_k\leq s< t_{k+1}$, by (\ref{eq:cas}), Lemma \ref{lem3} and Lemma \ref{lem4},
we can then show in the same way as in the case of SDEs that
\begin{equation}\label{eq:theo18}
E|Y(s)-\bar{Y}(s)|^2\leq C\Delta.\\
\end{equation}
Substituting (\ref{eq:theo17}), (\ref{eq:theo18}), into (\ref{eq:theo16}) immediately shows that
\begin{equation}\label{eq:theo19}
\begin{split}
&E(\sup_{0\leq s\leq t}|Y(s)-y(s)|^2)\\
&\quad\leq C\int_{0}^{t}(E|Y(s)-y(s)|^2+E|\bar{Y}(s)-Y(s)|^2)ds+C\Delta+o(\Delta)\\
&\quad\leq C\int_{0}^{t}E(\sup_{0\leq s\leq
t}|Y(s)-y(s)|^2)ds+C\Delta+o(\Delta).\\
\end{split}
\end{equation}
Therefore, from Gronwall inequality we obtain that
\[E(\sup_{0\leq t\leq T}|Y(t)-y(t)|^2)\leq C\Delta+o(\Delta).\]
The proof is complete.

\section{The general multi-dimensional case}
The results derived in Section 3 can be easily generalized to the
multi-dimensional case, we just summarize the related numerical
schemes and the convergence results in this section.

 Consider the solution
$y(t)=\{(y^1(t),...,y^d(t))^T, t\geq 0\}$ of the $d$-dimensional
SDEwMSs
\begin{equation*}
\begin{split}
y(t)=y(0)+\int_0^t f(y(s),r(s))ds+\sum_{j=1}^{m}\int_0^t g^j
(y(s),r(s))dW^j(s),\\
\end{split}
\end{equation*}
for $t\geq0$. Here $y(0)\in\mathbb{R}^d$ denotes the deterministic
initial value, $W^j=\{W^j(t), t\geq 0\}$, $j\in\{1,2,...,m\}$ is a
standard Brownian motion. $r(t)$ is a Markov chain. The function
$f:\mathbb{R}^d\times S\mapsto \mathbb{R}^d$ has the $k$th component
$f^k(\cdot,\cdot)$. The function $g^j:\mathbb{R}^d\times S\mapsto
\mathbb{R}^d, j\in{1,2,...,m}$ has the $k$th component
$g^{k,j}(\cdot,\cdot)$. We define the function $\bar{f}_{\eta}$ for
$\eta\in[0,1]$ with the $k$th component
\begin{equation*}
\begin{split}
\bar{f}_{\eta}^k(x,i)=f^k(x,i)-\eta_k \sum_{j_1,j_2=1}^{m}
\sum_{i=1}^{d}g^{i,j_1}(x,i)\frac{\partial g^{i,j_2}(x,i)}{\partial
x^i}.\\
\end{split}
\end{equation*}
for $(x,i)\in\mathbb{R}^d\times S$, which satisfies the following GL
condition and LG condition

\noindent($\mathcal{H}$3$'$) GL: For all $\ (x,i),(y,i)\in
\mathbb{R}^d\times S$, there exists a constant $L_3>0$ such that
\[|\bar{f}_\eta(x,i)-\bar{f}_\eta(y,i)|^2\leq L_3|x-y|^2.\]
\noindent($\mathcal{H}$4$'$)  LG: For all $\ (x,i)\in
\mathbb{R}^d\times S$, there exists a constant $L_4>0$ such that
\[|\bar{f}_\eta(x,i)|^2\leq L_4(1+|x|^2).\]

The $k$th component of the proposed family of strong PCEM schemes is
given by the predictor
\begin{equation*}
\begin{split}
\widetilde{Y}_{t_{n+1}}^k&=Y_{t_n}^k+f^k (Y_{t_n},r_{t_n})\Delta+\sum_{j=1}^m g^{k,j}(Y_{t_n},r_{t_n})\Delta W_{t_n}^j,\\
\end{split}
\end{equation*}
and by the corrector
\begin{equation*}
\begin{split}
Y_{t_{n+1}}^k=&Y_{t_n}^k+\{\theta_k
\bar{f}_\eta^k(\widetilde{Y}_{t_{n+1}},r_{t_n})+(1-\theta_k)\bar{f}_\eta^k(Y_{t_n},r_{t_n})\}\Delta\\
&+\sum_{j=1}^m\{\eta_k g^{k,j}(\widetilde{Y}_{t_{n+1}},r_{t_n})+(1-\eta_k)g^{k,j}(Y_{t_n},r_{t_n})\}\Delta W_{t_n},\\
\end{split}
\end{equation*}
for $\theta_k, \eta_k\in[0,1]$, $k\in\{1,2,...,d\}.$

Hence we can define the approximation solution $Y(t)$ similarly to
equation (\ref{eq:cas}), then we can derive the following theorem
analogically.
\begin{theorem}
Assume the SDEwMSs (\ref{eq:sdewms}) defined on $(\Omega,\mathcal{F},\{\mathcal{F}_t\}_{t\geq0},P)$ satisfying\\[.5cm]
\textbf{a}) $W(t),r(t)$ are independent,\\[.5cm]
\textbf{b}) $f(\cdot,\cdot),\ g(\cdot,\cdot),\bar{f}_\eta(\cdot,\cdot)$ satisfy conditions
$(\mathcal{H}1)$, $(\mathcal{H}2)$, ($\mathcal{H}$3$'$) and ($\mathcal{H}$4$'$),\\[.5cm]
then the unique strong solution $y(t)$ and numerical solution $Y(t)$
satisfying:
\[E(\sup_{0\leq t\leq T}|Y(t)-y(t)|^2)\leq C\Delta+o(\Delta),\]
where $C$ is a positive constant independent of $\Delta$.
\end{theorem}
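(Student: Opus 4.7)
The plan is to mirror the proof of Theorem \ref{theo31} componentwise, combining the $d$ coordinates via the equivalence $|x|^2 \le d\max_k |x^k|^2$ and $|x|^2 = \sum_k |x^k|^2$, and handling the additional sum over the $m$ driving Brownian motions with the Cauchy--Schwarz inequality and It\^o's isometry applied noise-by-noise.

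First I would extend the discrete scheme to a continuous interpolant $Y(t)$ on $[0,T]$ analogously to (\ref{eq:cas}), defining the residual processes $R_{l,n}^k$ for $l=1,2,3,4$ and $k=1,\dots,d$ exactly as in (\ref{eq:pcemr1})--(\ref{eq:pcemr4}) but with the drift and diffusion summed over $j=1,\dots,m$. The predictor increment satisfies $E|\widetilde Y_{t_{n+1}} - Y_{t_n}|^2 \le C\Delta$, since the drift contributes $O(\Delta^2)$ and each of the $m$ diffusion contributions contributes $O(\Delta)$ by It\^o's isometry and the LG condition $(\mathcal H 2)$.

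Next I would establish the multidimensional analogues of Lemma \ref{lem1} and Lemma \ref{lem2} (uniform moment bounds on $Y(t)$ and on $Y_{t_n}$), which follow from the same Doob/BDG/Gr\"onwall chain used in Mao--Yuan, with the only change being that the diffusion integral splits as $\sum_{j=1}^m \int_0^t g^j(\bar Y(s),\bar r(s))\,dW^j(s)$ and It\^o's isometry is applied noise-by-noise. Then I would prove the multidimensional versions of Lemma \ref{lem3} and Lemma \ref{lem4}: for each fixed $k$ and $l$ the argument of Lemma \ref{lem3} applies directly, using $(\mathcal H 3')$ and $(\mathcal H 4')$ for the drift-type residuals $R_{1},R_{2},R_{3}$ and using $(\mathcal H 1)$ together with Doob's martingale inequality for the martingale residual $R_{4}$. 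The componentwise bounds then combine into the vector bound via the norm equivalence mentioned above.

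Having these lemmas in place I would reproduce the argument (\ref{eq:theo11})--(\ref{eq:theo19}). Decompose $Y(t)-y(t)$ as an integral in $ds$ of $f(\bar Y,\bar r)-f(y,r)$, plus a sum over $j=1,\dots,m$ of integrals in $dW^j$ of $g^j(\bar Y,\bar r)-g^j(y,r)$, plus the four residual terms. Apply H\"older, Doob, It\^o's isometry (once per Brownian motion), the GL condition $(\mathcal H 1)$, and the Yuan--Mao device that bounds $E\int_0^t|f(\bar Y(s),\bar r(s))-f(\bar Y(s),r(s))|^2\,ds \le C\Delta+o(\Delta)$ using the conditional independence of $\bar Y(t_k)$ and $I_{\{r(s)\neq r(t_k)\}}$ given $r(t_k)$. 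This reduces the problem to $Z(t)\le C\int_0^t Z(s)\,ds + C\Delta + o(\Delta)$, and Gr\"onwall's inequality finishes the proof.

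The main obstacle is purely bookkeeping: the corrected drift $\bar f_\eta^k$ now involves a double sum $\sum_{j_1,j_2}\sum_i g^{i,j_1}\,\partial_{x^i}g^{i,j_2}$, so one must check carefully that the residuals $R_{2,n}^k$ remain controlled in $L^2$. However, this is absorbed into the hypotheses $(\mathcal H 3')$ and $(\mathcal H 4')$ rather than derived, so no genuinely new analytic idea is needed beyond the scalar case --- only careful index tracking and repeated use of the elementary inequality $|\sum_{j=1}^m a_j|^2 \le m\sum_{j=1}^m |a_j|^2$.
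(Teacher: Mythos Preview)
Your proposal is correct and follows exactly the route the paper intends: the paper does not give a separate proof for this multidimensional statement but simply remarks that one defines $Y(t)$ analogously to (\ref{eq:cas}) and ``can derive the following theorem analogically,'' i.e.\ by rerunning the proof of Theorem~\ref{theo31} with the obvious componentwise and noise-by-noise modifications. Your outline supplies precisely those modifications (norm equivalence $|x|^2=\sum_k|x^k|^2$, It\^o's isometry applied to each $W^j$, the inequality $|\sum_j a_j|^2\le m\sum_j|a_j|^2$, and the observation that the extra structure of $\bar f_\eta^k$ is absorbed into $(\mathcal H3')$, $(\mathcal H4')$), so there is nothing to add beyond the minor remark that $R_{1}$ is controlled via $(\mathcal H1)$ on $f$ rather than $(\mathcal H3')$ on $\bar f_\eta$.
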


\section{Numerical Stability }
In this section we consider numerical stability issues, extending
the analysis in Platen and Shi $\cite{PS8}$ to the Markovian
switching case. When simulating discrete time approximations of
solutions of SDEwMSs, numerical stability is clearly as important
as numerical efficiency. There have been various efforts made in the
literature trying to study numerical stability for a given scheme
approximating solutions of SDEs, see, for instance, Hofmann and
Platen $\cite{HP94}$, Higham $\cite{H0}$, Bruti-Liberati and Platen
$\cite{BP8}$ and Platen and Shi $\cite{PS8}$. Generally, for
analyzing numerical stability, some specifically designed test
equations are necessary to be introduced, the test SDEs used in the
above literatures are linear SDEs with multiplicative noise defined
as
\begin{equation}\label{eq:lsdes}
\begin{split}
dX_t=(1-\frac{3}{2}\alpha)\lambda X_tdt+\sqrt{\alpha |\lambda|}X_t dW_t,\\
\end{split}
\end{equation}
for every $t\geq 0$, where $X_0\geq0$, $\lambda<0$ and
$\alpha\in[0,1)$. Its explicit solution is of the form
\begin{equation}\label{eq:eslsdes}
\begin{split}
X_t=X_0exp\{(1-\alpha)\lambda t+\sqrt{\alpha |\lambda|}W_t\},t\geq 0\\
\end{split}
\end{equation}

As a unified criterion, Platen and Shi $\cite{PS8}$ proposed the
concept of $p$-stability criterion, which means that a process is
$p$-stable if in the long run its pth moment vanishes. Hence, for
$p>0, \lambda<0$, $p$-stable in equation (\ref{eq:lsdes}) may be characterized by
\[\lim_{t\rightarrow \infty}E(|X_t|^p)=0\quad \mathrm{iff}\quad 0\leq\alpha<\frac{1}{1+\frac{p}{2}}.\]
Since for different combinations of values of $\lambda\Delta,\alpha$
and $p$ with given time step size $\Delta$, a discrete time
approximation $Y_t$ and the original continuous process $X_t$ have
different stability properties. To explore these differences, the
concept of stability region is introduced. The stability region,
denoted by $\Gamma$, is determined by those triplets
$(\lambda\Delta,\alpha,p)$
$\in(-\infty,0)\times[0,1)\times(0,\infty)$
 for which the discrete time approximation $Y_t$  is
$p$-stable with time step size $\Delta$, when applied to the test
equation (\ref{eq:lsdes}).

By defining the random variable
\[G_{n+1}(\lambda\Delta,\alpha)=|\frac{Y_{n+1}}{Y_n}|,\]
which is called the transfer function of the approximation $Y_t$ at
time $t_n$, Platen and Shi $\cite{PS8}$ derive the following useful
result which can determine the stability regions for given schemes
by the following theorem.
\begin{theorem}
A discrete time approximation is for given $\lambda<0$,
$\alpha\in[0,1)$ and $p>0$, p-stable if and only if
\[E((G_{n+1}(\lambda\Delta,\alpha))^p)<1.\]
\end{theorem}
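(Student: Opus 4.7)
The plan is to exploit the multiplicative structure that any reasonable one-step scheme inherits when applied to the linear test SDE \eqref{eq:lsdes}. Since the drift and diffusion coefficients are linear in the state variable, plugging $f(x)=(1-\tfrac{3}{2}\alpha)\lambda x$ and $g(x)=\sqrt{\alpha|\lambda|}\,x$ (and the corresponding $\bar f_\eta$) into the PCEM recursion, one can factor $Y_{t_n}$ out of every term. The first step is therefore to write out
\[
Y_{t_{n+1}} \;=\; \Phi_{n+1}(\lambda\Delta,\alpha,\Delta W_{t_n})\,Y_{t_n},
\]
where $\Phi_{n+1}$ is a deterministic function of $\lambda\Delta$, $\alpha$ and the single Brownian increment $\Delta W_{t_n}$. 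Taking absolute values then yields $|Y_{t_{n+1}}| = G_{n+1}(\lambda\Delta,\alpha)|Y_{t_n}|$ with $G_{n+1}=|\Phi_{n+1}|$, which is precisely the transfer function in the statement.

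Next I would use the fact that the Brownian increments $\{\Delta W_{t_n}\}_{n\geq 0}$ are i.i.d.\ $N(0,\Delta)$ random variables, so the sequence $\{G_{n+1}(\lambda\Delta,\alpha)\}_{n\geq 0}$ is i.i.d.\ with the common distribution of some random variable $G(\lambda\Delta,\alpha)$. Iterating the multiplicative relation gives
\[
|Y_{t_n}|^p \;=\; |Y_0|^p\prod_{k=1}^{n}\bigl(G_k(\lambda\Delta,\alpha)\bigr)^p,
\]
and since $Y_0$ is deterministic and each $G_k$ is independent of $Y_{t_{k-1}}$, taking expectations and using independence factorises the product:
\[
E\bigl(|Y_{t_n}|^p\bigr) \;=\; |Y_0|^p\,\Bigl(E\bigl[(G(\lambda\Delta,\alpha))^p\bigr]\Bigr)^n.
\]

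From here the equivalence is immediate: if $q:=E[G^p]<1$ then $E(|Y_{t_n}|^p)=|Y_0|^p q^n\to 0$, so the approximation is $p$-stable; conversely if $q\geq 1$ then $E(|Y_{t_n}|^p)\geq |Y_0|^p$ for every $n$, hence the limit cannot be zero (unless $Y_0=0$, which is the trivial case excluded by $X_0\geq 0$ and the test-equation setup), and $p$-stability fails.

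The only genuine technical point, and the place I would be most careful, is the derivation of the multiplicative form $Y_{t_{n+1}}=\Phi_{n+1}Y_{t_n}$ for the full PCEM scheme: one has to substitute the linear coefficients into the predictor \eqref{eq:pcemp}, the corrected drift \eqref{eq:cdfun} (where $\partial g/\partial x=\sqrt{\alpha|\lambda|}$ is constant, so $\bar f_\eta$ remains linear in $x$), and the corrector \eqref{eq:pcemc}, and verify that after collecting terms the coefficient of $Y_{t_n}$ is indeed a function of $\lambda\Delta$, $\alpha$ and $\Delta W_{t_n}$ alone. Once this algebraic factorisation is secured, the rest is a one-line i.i.d.\ argument. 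Note that the test equation carries no Markovian switching, so no additional care is required for the chain $r(t)$.
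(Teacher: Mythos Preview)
Your argument is correct and is essentially the standard proof of this result. Note, however, that the paper itself does not prove Theorem~5.1: it is quoted from Platen and Shi~\cite{PS8} and stated without proof. So there is nothing in the paper to compare against beyond the citation.

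Your sketch supplies exactly what is needed. The crucial structural observation is the one you single out: linearity of $f$, $g$ and hence of $\bar f_\eta$ on the test equation~\eqref{eq:lsdes} forces the one-step map to be multiplicative, $Y_{t_{n+1}}=\Phi_{n+1}Y_{t_n}$, with $\Phi_{n+1}$ a fixed function of $\lambda\Delta$, $\alpha$ and the single increment $\Delta W_{t_n}$. On the equidistant grid used in the paper the increments are i.i.d., so the $G_k=|\Phi_k|$ are i.i.d.\ and $E[G_{n+1}^p]$ is independent of $n$; this is what makes the stated criterion well-posed in the first place. From there your factorisation
\[
E\bigl(|Y_{t_n}|^p\bigr)=|Y_0|^p\bigl(E[G^p]\bigr)^n
\]
and the dichotomy $q<1$ versus $q\ge 1$ are exactly right, including your handling of the borderline $q=1$ and the trivial case $Y_0=0$.

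One small remark: the theorem as stated in the paper is phrased for a generic ``discrete time approximation'', not only the PCEM family. Your proof goes through verbatim for any one-step scheme that is homogeneous of degree one in the state when applied to~\eqref{eq:lsdes}; you may want to say this explicitly rather than tying the algebra specifically to~\eqref{eq:pcemp}--\eqref{eq:pcemc}.
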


In the spirit of Platen and Shi $\cite{PS8}$, stability regions for
a range of schemes of SDEwMSs are discussed in this paper. Here we
consider the test process $X_{r(t)}=\{X_{t,r(t)},t\geq 0\}$
satisfies the linear SDEwMSs with multiplicative noise
\begin{equation}\label{eq:lsdewmss}
\begin{split}
dX_t=(1-\frac{3}{2}\alpha(r(t)))\lambda(r(t)) X_tdt+\sqrt{\alpha(r(t)) |\lambda(r(t))|}X_t dW_t,\\
\end{split}
\end{equation}
for every $t\geq 0$, where $r(t)$ is a Markov chain taking values in
a finite state space $S=\{1,2,...,N\}$,
 $r(0)=i_0\in S$, $X_0=X_{r(0)}\geq0$ and $(\alpha(r(t)),\lambda(r(t)))\in\{(\alpha(i),\lambda(i)),\alpha(i)\in[0,1), \lambda(i)<0,i=1,2,...,N\}$.
It is well known that the explicit solution of the test equation
(\ref{eq:lsdewmss}) is (see Mao and Yuan $\cite{MY6}$)

\begin{equation}\label{eq:eslsdewmss}
\begin{split}
X_t=X_0exp\{\int_{0}^{t}(1-\alpha(r(t)))\lambda(r(t)) dt+\int_{0}^{t}\sqrt{\alpha(r(t)) |\lambda(r(t))|}dW_t\}.\\
\end{split}
\end{equation}

For the convenience of numerical comparison, we introduce the
following stability criterion:
\begin{defi}\label{def:sps}
For $p>0$, a process $Y_{r(t)}=\{Y_{t,r(t)},t>0\}$ is called
state-p-stable if for each $i=1,2,...,N$, $Y_i=\{Y_{t,i},t>0\}$
satisfies
\[\lim_{t\rightarrow \infty}E(|Y_{t,i}|^p)=0.\]
\end{defi}

\begin{defi}\label{def:ssr}
The state-stability region $\Gamma_s$ is determined by those
triplets $(\lambda\Delta,\alpha,p)$
$\in(-\infty,0)\times[0,1)\times(0,\infty)$
 for which the discrete time approximation $Y_{r(t)}$ is
state-p-stable, when applied to the test equation
(\ref{eq:lsdewmss}), for each $i=1,2,...,N$,
$(\alpha(i),\lambda(i)\Delta,p)\in\Gamma_s$ with time step size
$\Delta$.
\end{defi}

Then we can obtain the following conclusion:

\begin{theorem}
The state-stability region $\Gamma_s$, which is generated by one
algorithm applied to the test equation (\ref{eq:lsdewmss}),  is the same as the
stability region $\Gamma$, which is generated by the same algorithm
applied to the test equation (\ref{eq:lsdes}).
\end{theorem}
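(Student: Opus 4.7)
The plan is to reduce the switching case to the non-switching case one mode at a time, using Definition 5.1 as a bridge. Observe that state-$p$-stability is defined component-wise: by Definition \ref{def:sps}, the approximation $Y_{r(t)}$ is state-$p$-stable if and only if, for every fixed mode $i \in S$, the process $Y_i = \{Y_{t,i}, t > 0\}$ satisfies $\lim_{t\to\infty} E(|Y_{t,i}|^p) = 0$. So membership in $\Gamma_s$ decouples into $N$ independent per-state conditions.

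Next I would show that each per-state condition is exactly the non-switching $p$-stability condition. When the Markov chain is frozen at a single state $i$, the drift coefficient of (\ref{eq:lsdewmss}) becomes the constant $(1 - \frac{3}{2}\alpha(i))\lambda(i)$ and the diffusion coefficient becomes $\sqrt{\alpha(i)|\lambda(i)|}$, so the test equation reduces identically to (\ref{eq:lsdes}) with parameters $(\alpha(i), \lambda(i))$. Because the one-step recursion of any scheme considered here uses only the drift and diffusion values at the current mode $r_{t_n}$, the approximation $Y_{t,i}$ obeys exactly the same recursion as the scheme applied to (\ref{eq:lsdes}) with $\alpha = \alpha(i)$, $\lambda = \lambda(i)$. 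Consequently the transfer function
\[
G_{n+1}^{(i)}(\lambda(i)\Delta, \alpha(i)) = \left|\frac{Y_{t_{n+1},i}}{Y_{t_n,i}}\right|
\]
coincides with the transfer function of the non-switching scheme at parameters $(\alpha(i), \lambda(i))$.

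Applying Theorem 5.1 in the fixed-mode setting, $p$-stability of $Y_i$ is equivalent to $E\bigl((G_{n+1}^{(i)}(\lambda(i)\Delta, \alpha(i)))^p\bigr) < 1$, which is exactly the defining condition for the triplet $(\lambda(i)\Delta, \alpha(i), p)$ to lie in $\Gamma$. Chaining the equivalences yields
\[
(\lambda\Delta, \alpha, p) \in \Gamma_s
\quad \Longleftrightarrow \quad
(\lambda(i)\Delta, \alpha(i), p) \in \Gamma \ \text{for every } i \in S,
\]
which (together with Definition \ref{def:ssr}) is the statement $\Gamma_s = \Gamma$.

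The only delicate point, and the one I would verify carefully, is the reduction step where the one-step recursion at mode $i$ is identified with the non-switching recursion. This is clean for any explicit or predictor-corrector Euler-type scheme whose coefficients are evaluated only at the current state $r_{t_n}$, because then the update formula in $Y_{t,i}$ involves no sample of any other mode; the switching structure plays no role once we condition on remaining in state $i$. No Gronwall-type estimates are needed, and the argument is essentially a matching of recursions followed by an appeal to Theorem 5.1.
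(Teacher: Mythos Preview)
Your proposal is correct and follows essentially the same route as the paper: both arguments reduce membership in $\Gamma_s$ to the per-mode $p$-stability conditions via Definitions~\ref{def:sps} and~\ref{def:ssr}, and then identify each per-mode condition with membership in $\Gamma$. The paper's proof is terser---it simply declares $\Gamma\subseteq\Gamma_s$ obvious and unwinds the definitions for the reverse inclusion---whereas you make explicit the matching of recursions and invoke Theorem~5.1 through the transfer function, but this is elaboration rather than a different idea.
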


\begin{proof}
$\Gamma\subseteq\Gamma_s$ is obvious. To prove
$\Gamma_s\subseteq\Gamma$, suppose for all $i=1,2,...,N$,
$(\alpha(i),\lambda(i)\Delta,p)\in\Gamma_s$. By Definition
\ref{def:ssr} we can see that  $Y_{r(t)}$ is state-$p$-stable. And
by Definition \ref{def:sps} we know for each $i=1,2,...,N$,
$Y_i=\{Y_{t,i},t>0\}=\{Y_{t,(\alpha(i),\lambda(i)\Delta,p)},t>0\}$
is $p$-stable, so $(\alpha(i),\lambda(i)\Delta,p)\in\Gamma$, for all
$i=1,2,...,N$. Immediately, we have $\Gamma_s\subseteq\Gamma$.
\end{proof}

\begin{Rem}By the conclusions derived in Platen and Shi
$\cite{PS8}$, we can also see that the PCEM methods are more
efficient than the EM method under these conditions in SDEwMSs case.
\end{Rem}

\section{Numerical examples}
In this section, we discuss two numerical examples to illustrate our
theory established in the previous sections. Let us now consider
several combinations of parameters $\theta$ and $\eta$ in equation
(\ref{eq:pcemc}), the names of the methods listed below are similar to those
used in Bruti-Liberati and Platen $\cite{BP8}$ and Platen and Shi
$\cite{PS8}$.

$(1) \theta=0, \eta=0$ (called EM scheme),

$(2) \theta=\frac{1}{2}, \eta=\frac{1}{2}$ (called symmetric PCEM
scheme),

$(3) \theta=\frac{1}{2}, \eta=0$ (called semi-drift-implicit PCEM
scheme),

$(4) \theta=1, \eta=0$ (called drift-implicit PCEM method),

$(5) \theta=0, \eta=\frac{1}{2}$ (called semi-diffusion-implicit
PCEM scheme),

$(6) \theta=1, \eta=1$ (called fully implicit PCEM scheme).

For a given problem, we will compare the simulated paths for these
different degrees of implicitness. If these paths differ
significantly from each other, then some numerical stability problem
is likely to be present and one needs to make an effort in providing
extra numerical stability for further research.

\begin{example}
Let W(t) be a scalar Brownian motion. Let r(t) be a right-continuous
Markov chain taking values in $S=\{1,2\}$ with generator
\[Q=\begin{bmatrix}-1 & 1 \\ q & -q \end{bmatrix}\]
And W(t) and r(t) are assumed to be independent. Consider an
one-dimensional linear SDWwMS
\begin{equation}\label{eq:ex1sdewms}
dy(t)=y(t)a(r(t))ds+y(t)b(r(t))dW(t)
\end{equation}
for $t\geq 0$, where
\begin{eqnarray*}\left\{
\begin{array}{l}
\displaystyle a(1)=1,\quad\quad b(1)=2\\[.5cm]
\displaystyle a(2)=2,\quad\quad b(2)=1\\
\end{array}
\right.
\end{eqnarray*}
\end{example}

It is well known that equation (\ref{eq:ex1sdewms}) has an explicit solution
\begin{equation}\label{eq:ex1essdewms}
y(t)=y_0\textrm{exp}[\int_{0}^{t}a(r(s))ds+\int_{0}^{t}(r(s))dW(s)-\frac{1}{2}\int_{0}^{t}b^2(r(s))ds].\\
\end{equation}

 For simulation reason, it is convenient to transform (\ref{eq:ex1essdewms})
 into following recursion form with $y(t_0)=y_0$,
\begin{equation}\label{eq:ex1enssdewms}
\begin{split}
y(t_{k+1})&
\approx y(t_k)\textrm{exp}[(t_{k+1}-t_k)a(r_{t_k})+(W({t_{k+1}})-W({t_k}))b(r_{t_k})\\
&\quad -\frac{1}{2}(t_{k+1}-t_k)b^2(r_{t_k})].\\
\end{split}
\end{equation}
Notice that $y(t_{k+1})$ in (\ref{eq:ex1enssdewms}) is not the exact value of $y(t)$
at the division points $t_{k+1}$, because $r(s)$ is not necessarily
constant on $[t_k,t_{k+1}]$. However, since
\[P\{r(t_{k+1})=i|r(t_k)=i\}=1+q_{ii}(t_{k+1}-t_k)+o(t_{k+1}-t_k)\rightarrow1\]
as $\Delta\downarrow0$, for sufficiently small $\Delta$, it is
reasonable to use (\ref{eq:ex1enssdewms}) as an approximation of the exact solution of
$y(t).\\$

$\mathbf{Case\ 1.}$ Let $q=2$, $y_0=200$, $r_0=1$, $\Delta=10^{-5}$,
$k=0,1,...,5\times 10^6$, namely for the corresponding time $0\leq
t\leq 50$. Compute the one-step transition probability matrix
\[Q(\Delta)=\begin{bmatrix}0.99999 & 0.00001 \\ 0.00002 & 0.99998 \end{bmatrix}\]
for the discrete Markov chain $r_{t_k}=r(k\Delta)$.\

By applying the previously described procedure, the trajectory of
the approximate solution $Y(t)$ with given stepsize $\Delta$ can be
constructed. In this paper, we do not draw the figure of the
simulating trajectory, instead, to carry out the numerical
simulation clearly, we repeatedly simulate and compute
$\displaystyle\sup_{t_k\in[0,50]}(|Y_{(\theta_i,\eta_i)}(t_k)-y(t_k)|^2)$
 ($i=1,2,3,4,5,6$) for 1000 times, then calculate the sample
mean $\widehat{E}(\displaystyle\sup_{t_k\in
[0,50]}|Y_{(\theta_i,\eta_i)}(t_k)-y(t_k)|^2)$($i=1,2,3,4,5,6$).
The results are listed in the following Table 1.\\

\begin{tabular}{|c|c|}\hline
$\quad\theta_i, \eta_i\quad$ & $\quad\quad\quad\quad\quad\widehat{E}(\displaystyle\sup_{t_k\in[0,50]}|Y_{(\theta_i,\eta_i)}(t_k)-y(t_k)|^2)\quad\quad\quad\quad\quad$ \\
\hline $0,0$                      & 1.90674403017316e+30\\
\hline $\frac{1}{2}, \frac{1}{2}$ & 7.84001334096008e+25\\
\hline $\frac{1}{2}, 0$           & 1.90607879136196e+30\\
\hline $1, 0$                     & 1.90541425316912e+30\\
\hline $0, \frac{1}{2}$           & 7.21187503884961e+25\\
\hline $1, 1$                     & 1.99576630102430e+30\\
\hline
\end{tabular}
\begin{center}\scriptsize{\textbf{Table 1}. Estimation of the errors between
the numerical solutions and exact solution\quad\quad}\\
\end{center}

$\mathbf{Case\ 2.}$ Let $q=1.5$, $y_0=200$, $r_0=1$,
$\Delta=10^{-5}$, $k=0,1,...,5\times 10^6$, namely for the
corresponding time $0\leq t\leq 50$. Compute the one-step transition
probability matrix
\[Q(\Delta)=\begin{bmatrix}0.99999 & 0.00001 \\ 0.000015 & 0.999985 \end{bmatrix}\]
for the discrete Markov chain $r_{t_k}=r(k\Delta)$.\

To carry out the numerical simulation we repeatedly simulate and
compute
$\displaystyle\sup_{t_k\in[0,50]}(|Y_{(\theta_i,\eta_i)}(t_k)-y(t_k)|^2)$
 ($i=1,2,3,4,5,6$) for 1000 times, then calculate the sample
mean $\widehat{E}(\displaystyle\sup_{t_k\in
[0,50]}|Y_{(\theta_i,\eta_i)}(t_k)-y(t_k)|^2)$($i=1,2,3,4,5,6$).
The results are listed in the following Table 2.\\

\begin{tabular}{|c|c|}\hline
$\quad\theta_i, \eta_i\quad$ & $\quad\quad\quad\quad\quad\widehat{E}(\displaystyle\sup_{t_k\in[0,50]}|Y_{(\theta_i,\eta_i)}(t_k)-y(t_k)|^2)\quad\quad\quad\quad\quad$ \\
\hline $0,0$                      &3.33550923616514e+36\\
\hline $\frac{1}{2}, \frac{1}{2}$ &1.71565111220697e+33\\
\hline $\frac{1}{2}, 0$           &3.34748775539284e+36\\
\hline $1, 0$                     &3.35948703848527e+36\\
\hline $0, \frac{1}{2}$           &3.51753180196352e+31\\
\hline $1, 1$                     &3.23681904239271e+36\\
\hline
\end{tabular}
\begin{center}\scriptsize{\textbf{Table 2}. Estimation of the errors between
the numerical solutions and exact solution\quad\quad}\\
\end{center}

$\mathbf{Case\ 3.}$ Let $q=0.5$, $y_0=200$, $r_0=1$,
$\Delta=10^{-5}$, $k=0,1,...,5\times 10^6$, namely for the
corresponding time $0\leq t\leq 50$. Compute the one-step transition
probability matrix
\[Q(\Delta)=\begin{bmatrix}0.99999 & 0.00001 \\ 0.000005 & 0.999995 \end{bmatrix}\]
for the discrete Markov chain $r_{t_k}=r(k\Delta)$.\

To carry out the numerical simulation we repeatedly simulate and
compute
$\displaystyle\sup_{t_k\in[0,50]}(|Y_{(\theta_i,\eta_i)}(t_k)-y(t_k)|^2)$
 ($i=1,2,3,4,5,6$) for 1000 times, then calculate the sample
mean $\widehat{E}(\displaystyle\sup_{t_k\in
[0,50]}|Y_{(\theta_i,\eta_i)}(t_k)-y(t_k)|^2)$($i=1,2,3,4,5,6$).
The results are listed in the following Table 3.\\

\begin{tabular}{|c|c|}\hline
$\quad\theta_i, \eta_i\quad$ & $\quad\quad\quad\quad\quad\widehat{E}(\displaystyle\sup_{t_k\in[0,50]}|Y_{(\theta_i,\eta_i)}(t_k)-y(t_k)|^2)\quad\quad\quad\quad\quad$ \\
\hline $0,0$                      &1.44025299136110e+71\\
\hline $\frac{1}{2}, \frac{1}{2}$ &4.66862461497885e+66\\
\hline $\frac{1}{2}, 0$           &1.39047511677566e+71\\
\hline $1, 0$                     &1.34154643977816e+71\\
\hline $0, \frac{1}{2}$           &5.41110836832358e+67\\
\hline $1, 1$                     &1.42636805044311e+71\\
\hline
\end{tabular}
\begin{center}\scriptsize{\textbf{Table 3}. Estimation of the errors between
the numerical solutions and exact solution\quad\quad}\\
\end{center}

This example has been studied in Yuan and Mao $\cite{YM4}$ in which
Case 1, Case 2 and Case 3 represent three different exponential
stability or instability situations respectively. However, since we
can easily show that the equation (\ref{eq:ex1sdewms}) does not
satisfy the conditions of equation (\ref{eq:lsdewmss}), it will be
not state-$p$-stable, the computer simulation results in Case 1,
Case 2 and Case 3 illustrate this point in some extent.
Nevertheless, when the parameters $\theta$ and $\eta$ are selected
reasonably, some PCEM methods are much more efficient
than the EM method in a certain extent.\\

\begin{example}
 Let r(t) be a right-continuous
Markov chain taking values in $S=\{1,2\}$ with generator
\[Q=\begin{bmatrix}-0.5 & 0.5 \\ 0.5 & -0.5 \end{bmatrix}\]
therefore, the one step transition probability from $r(t_k)$ to
$r(t_{k+1})$ is $e^{Q\Delta}$.
\end{example}

Consider the same 1-dimensional linear SDWwMS
\begin{equation}\label{eq:ex2sdewms}
dy(t)=y(t)a(r(t))ds+y(t)b(r(t))dW(t)
\end{equation}
on $t\geq 0$, this time let $a(r(t))$, $b(r(t))$take values as
follows
\begin{eqnarray*}\left\{
\begin{array}{l}
\displaystyle a(1)=0.15,\quad\quad  b(1)=0.1\\[.5cm]
\displaystyle a(2)=0.05,\quad\quad  b(2)=0.1\\
\end{array}
\right.
\end{eqnarray*}

Choose initial values $y_0=10$, $r_0=1$, $T$ is fixed at 10. By
applying the previously described procedure, the trajectory of the
approximate solution $Y(t)$ with given stepsize $\Delta$ can be
constructed.

To carry out the numerical simulation we successively choose the
stepsize $\Delta$ as the following Table 4, and for each $\Delta$,
we repeatedly simulate and compute
$\displaystyle\sup_{t_k\in[0,10]}(|Y_{(\theta_i,\eta_i)}(t_k)-y(t_k)|^2),$
 ($i=1,2,3,4,5,6$) for 1000
times, then calculate the sample mean
$\widehat{E}(\displaystyle\sup_{t_k\in
[0,10]}|Y_{(\theta_i,\eta_i)}(t_k)-y(t_k)|^2)$($i=1,2,3,4,5,6$).
The results are listed in the following Table 4.\\
\\
\begin{tabular}{|c|c|c|c|c|c|c|}\hline
\multicolumn{7}{|c|}{\bfseries $\quad\quad\quad\quad\quad\quad\quad\widehat{E}(\displaystyle\sup_{t_k\in
[0,10]}|Y_{(\theta_i,\eta_i)}(t_k)-y(t_k)|^2)\quad\quad\quad\quad\quad\quad\quad$}\\ \hline
$\Delta$ $\backslash$ $\theta_i,\eta_i$ &$\quad0,0\quad$&$\quad\frac{1}{2},\frac{1}{2}\quad$&$\quad\frac{1}{2},0\quad$&$\quad1,0\quad$&$\quad0,\frac{1}{2}\quad$&$\quad1,1\quad$\\
\hline $0.1$ &9702.4683&15.5672&1421.4360&8805.6057&7629.3505&7912.6245\\
\hline $0.01$ &376.9640&0.2299&265.3262&457.8189&150.4551&342.0570\\
\hline $0.001$ &24.0510&0.0017&23.6959&25.8235&1.1710&23.8747\\
\hline $0.0001$ &3.0465&2.16e-05&3.1024&3.1944&0.0158&3.0511\\
\hline $0.00001$ &0.1968&1.48e-07 &0.1969&0.1971&9.73e-05&0.1968\\
\hline
\end{tabular}
\begin{center}\scriptsize{\textbf{Table 4}. Estimation of the errors between
the numerical and exact solutions\quad\quad}\\
\end{center}

Clearly, we can easily show that the equation (\ref{eq:ex2sdewms})
satisfies the conditions of equation (\ref{eq:lsdewmss}), so it will
be state-$p$-stable, and the simulation results listed in Table 4
just illustrate the stability properties in a certain extent. On the
one hand, the numerical method reveals that the numerical solution
$Y(t)$ defined by the strong PCEM methods converge to the exact
solution $y(t)$ in $L^2$ as step size $\Delta\downarrow0$, and the
order of convergence is one-half, i.e.
\[E(\sup_{0\leq t\leq T}|Y(t)-y(t)|^2)\leq C\Delta+o(\Delta).\]
On the other hand, when the parameters selection are reasonable, the
PCEM methods is much more efficient than the EM method, which
strongly demonstrate our results.\\

\end{document}